



\documentclass[12pt]{amsart}
\usepackage{graphicx,comment,color}
\usepackage{amssymb}
\usepackage{amsthm}
\usepackage{amsmath}
\usepackage[all]{xy}

\vfuzz2pt 
\hfuzz2pt 
\newtheorem{thm}{Theorem}[section]
\newtheorem{cor}[thm]{Corollary}
\newtheorem{lem}[thm]{Lemma}
\newtheorem{prop}[thm]{Proposition}
\theoremstyle{definition}
\newtheorem{defn}[thm]{Definition}
\theoremstyle{remark}
\newtheorem{rem}[thm]{Remark}
\numberwithin{equation}{section}

\newcommand{\UU}{\mathcal{U}}
\newcommand{\BB}{\mathcal{B}}\usepackage{color}
\usepackage{blkarray}
\begin{document}

\title[$\;$]{Extending Harvey's Surface Kernel Maps}%
\author{Jane Gilman}%
\address{Mathematics \& CS department, Rutgers University, Newark, NJ 07102}%
\email{gilmanjp@gmail.com; gilman@rutgers.edu}%
 \subjclass[2010]{Primary 20H10; 32G15; secondary 30F10; 30F35}
\dedicatory{In Memory of Bill Harvey}

\thanks{Some of this work was carried out while the author was a supported Visiting Fellow at Princeton University or was supported by the Rutgers Research Council}
\keywords{conformal automorphism, Reidemeister-Schreier rewriting system, adapted generating sets,  Riemann surfaces, mapping-class group, finite subgroups} 

\date{April 2, 2021  and March 22, 2021 revisions of July 17, 2020 version}

\begin{abstract}
Let $S$ be a compact Riemann surface and $G$ a group of conformal automorphisms of $S$ with $S_0=S/G$.  $S$ is a  finite regular branched cover of $S_0$. If $U$ denotes the unit disc, let  $\Gamma$ and $\Gamma_0$ be the Fuchsian groups with  $S= U/\Gamma$ and $S_0 = U/\Gamma_0$. There is a group homomorphism  of $\Gamma_0$ onto $G$ with kernel $\Gamma$ and this is termed  a {\sl surface kernel map}. Two surface kernel maps are equivalent if they differ by an automorphism of $\Gamma_0$. In his $1971$ paper Harvey showed that when $G$ is a cyclic group, there is a unique simplest representative for this equivalence class. His result  has played an important role in establishing subsequent results about conformal automorphism groups of surfaces. We extend his result to some surface kernel  maps onto arbitrary finite groups. These can be used along with the Schreier-Reidemeister Theory to find a  set of generators for $\Gamma$ and  the action of $G$ as an outer automorphism group on the fundamental group of $S$ putting  the action on  the fundamental group and the induced action on homology into a relatively simple format. As an example we compute generators for the fundamental group and a homology  basis together with the action of $G$  when $G$ is $\mathcal{S}_3$, the symmetric group on three letters. The action of $G$ shows that the homology basis found is not an adapted homology basis. \end{abstract}

\maketitle

\section{Introduction}

Let $S$ be a compact Riemann surface and $G$ a finite group of conformal automorphisms of $S$. Let $S_0$ be the quotient, $S_0=S/G$. We assume that $g$ is the genus of $S$ and $g_0$ that of $S_0$. The Riemann-Hurwitz relation (equation \ref{eq:RH})
gives the connection between $g$ and $g_0$ which depends upon the branch structure of the covering $S \rightarrow S_0$.   If $U$ denotes the unit disc, let  $\Gamma$ and $\Gamma_0$ be Fuchsian groups so that $S= U/\Gamma$ and $S_0 = U/\Gamma_0$. There is a group homomorphism  $\phi$ of $\Gamma_0$ onto $G$ with kernel $\Gamma$ and this is termed  a {\sl surface kernel map}. Two surface kernel maps are equivalent if they differ by an automorphism of $\Gamma_0$.
In 1966 W. J. Harvey began his study of conformal automorphism groups \cite{Harvey1} by looking at cyclic groups.
      In his $1971$ paper \cite{Harvey2} he showed that when $G$ is a cyclic group, there is a unique simplest representative in a surface kernel equivalence class.       This result  has played an important role in establishing subsequent results about conformal automorphism groups of surfaces. It has been used widely and has been cited in over $67$ papers, too many to cite here. Here we  extend his results to some surface kernels of maps onto arbitrary finite groups.

Riemann surfaces is an old topic and can be thought of as the cradle of much of modern mathematics including complex analysis, group theory, differential and algebraic geometry, algebraic topology,  combinatorial group theory and computation.  Since Riemann surfaces  have been studied in a number of different settings, many settings use  their own language. Many terms are equivalent.  Thus use of the {\sl surface kernel map} is equivalent to use of the {\sl generating function}. The {\sl mapping-class group} is the same as the {\sl Teichm\"uller modular group} and both act on Teichm\"uller space. This action in earlier times was more often termed  the action of the {\sl Teichm\"uller modular group} or simply {\sl the modular group}.

The  problems we consider here can be approached in a number of ways: in terms of curve lifting and defining subgroups of coverings or using the Reidemeister-Schreier theory. Our main approach here is the Reidemeister-Schreier theory but we also use curve lifting when it simplifies the argument.

While much of this paper is expository, especially an exposition of past terminology and alternative terminology, it contains a few new results, namely an extension of Harvey's list to some nonabelian groups (Theorem
 \ref{thm:ext})
and a computation of the action of $\mathcal{S}_3$ on the fundamental group (Section \ref{sec:S_3}) and on a non-adapted  integral homology basis where the action of elements of the group are easily computed.

This volume represents the culmination of a series of AMS special sessions on automorphisms of Riemann surfaces. Results on adapted homology bases, fundamental groups and Schreier-Reidemeister Theory  (S-R Theory) were presented in those sessions and published elsewhere. (See  the bibliography and papers cited in those papers). As a final paper for these sessions, we review the S-R theory and we give an example of its application when $G= {\mathcal{S}}_3$.

The organization of this paper is as follows:

Section \ref{sec:history} contains some history and methods. Further historical remarks are interspersed throughout the paper. Section \ref{sec:Julynotation} contains further  notation.  Harvey's results and extensions to arbitrary finite groups appear in section \ref{sec:HarveyThm} with Theorems \ref{thm:ext} and \ref{thm:furtherext}.  Coset representatives are discussed in section \ref{sec:cosetreps} and section \ref{sec:SRSum} gives the Schreier-Reidemeister theorem followed by section \ref{sec:SR} which gives its application to our situation. Finally, Section \ref{sec:S_3} gives the full details of the computation for the action of the symmetric group on three letters. We end in Section \ref{sec:questions} with some open questions.

\section{History and Methods} \label{sec:history}
The term surface kernel first appears in papers of Maclachlan \cite{MacL} and Harvey \cite{Harvey2}, but it is likely that it arose in Macbeath's 1966 Dundee lecture notes \cite{MacB} or discussions following the lectures and also papers \cite{MacB2, MacB3}. In later years Maclachlan \cite{mcLM} used the term generating function. Harvey used automorphisms of $\Gamma_0$ to obtain surface kernel maps for a cyclic group in a nice form,  from which he could derive results about the action of cyclic groups of conformal automorphisms.

Here we extend his list to include maps that  work for all finite groups with  $n = o(G)$. Additional background can be found in \cite{Matrix}. The concept of an automorphism with an adapted basis was used for a prime order group and later extended to an arbitrary group.
The initial idea for an adapted basis appeared in \cite{thesis} and subsequent extensions in \cite{Jalg, Link, Adapted} and papers referenced there.
Here we actually find the action on a homology basis 
for the symmetric group of order $6$, but it fails to be an adapted basis.

Our method is to choose minimal Schreier right coset representatives for the map $\phi: \Gamma_0 \rightarrow  G$  and combine the choice with Harvey's original surface kernel maps for cyclic groups.

\section{Notation}\label{sec:Julynotation}
In this section we fix notation and state some background results. We assume that the reader is familiar with the covering theory of surfaces and curve lifting. A good reference is \cite{Springer}. This will also be relevant in section \ref{sec:kernelLIFT}.

We let $n$ be the order of $G$ with $\pi: S \rightarrow S/G=S_0$ the projection. If $p$ is fixed by an element of $G$, then $G_p$, the stabilizer of $p$ is cyclic, say of order $m_p$.
We let $p_0= \pi(p)$. Then $\pi^{-1}(p_0)$ consists of $n/m_p$ distinct points each with a cyclic of stabilizer of order $m_p$.  The point $p_0 \in S_0$ is called a {\sl branch point} and $p$ is called a {\sl ramification point}. If the covering has $r$ branch points of orders $m_1,...,m_r$, then over $p_0$ instead of $n$ points,
there are ${\frac{n}{m_p}}$ points.


Since $S$ is compact, $\Gamma$ is isomorphic to the fundamental group of $S$.  By abuse of language we refer to elements of $\Gamma$ as both curves and group elements.

  While $\Gamma_0$  is not isomorphic to the fundamental group of $S_0$, it is the homomorphic image of
  $\pi_1(S/G - \{{\mbox{branch points}} \})$.
    Thus using the composed map: $$\pi_1(S/G-\{{\mbox{branch  points}} \}) \rightarrow \Gamma_0 \xrightarrow{\phi} G$$ one can talk about the images in $G$ of curves in $\Gamma_0$.

 Knowing the isomorphism between the fundamental group of $S$ and $\Gamma$, we speak about the action of $G$ on $\pi_1(S)$ which is induced by the action of $\Gamma_0/\Gamma$ on $\Gamma$, given by conjugation and the inverse of the quotient isomorphism ${\overline{\phi}}: \Gamma_0/\Gamma \rightarrow G$. The action is well-defined up to inner automorphisms of $\pi_1(S)$ and $\Gamma$.

Since $G$ is a finite group, $S_0$ has no cusps and thus the presentation for $\Gamma_0$ contains no parabolic elements. We may assume that  $\Gamma_0$ has presentation

\begin{equation} \label{eq:presentation}
\Gamma_0=
\langle a_1,...,a_{g_0}, b_1, ..., b_{g_0}, x_1,...,x_r  \; |\; (\Pi_{i=1}^{g_0} [a_i,b_i])x_1 \cdots x_r =1; \;\; x_i^{m_i} =1 \rangle
\end{equation}

Here the $m_i$ are positive integers each  at least $2$ and $[v,w]$ denotes the multiplicative commutator of elements $v$ and $w$. Any elliptic element of the group $\Gamma_0$ is conjugate to one of the $x_i$. No two different $x_i$'s are conjugate.

Knowing the surface kernel map is equivalent to knowing {\sl the generating vector}, the vector of length  $2g_0 +r$
given by

\begin{equation} (\phi(a_1), ... ,  \phi(a_{g_0}),\phi(b_1),..., \phi(b
_{g_0}), \phi(x_1), ..., \phi(x_r)).
\end{equation}

The presentation \ref{eq:presentation} is equivalent to there being $r$ branch points of order $m_i, i=1,...,r$, and thus by the Riemann-Hurwitz relation we have

\begin{equation} \label{eq:RH}
2g-2 = n(2g_0 -2) + n \cdot \Sigma_{i=1}^r (1- {\frac{1}{m_i}})
\end{equation}

We note for future use that if one has eliminated generators and relations and one has exactly one relation in which every generator and its inverse occurs exactly once, then it is equivalent by  the standard algorithm in \cite{Springer} to a standard surface presentation.  The algorithm in \cite{Springer} is given geometrically in terms of cutting and pasting sides of a fundamental polygon but translates to a purely algebraic algorithm as a set of generators for the group with a single defining relation and can be viewed as giving a fundamental polygon. A standard surface presentation is one given by $2g$ generators and one relation which is the product of $2g$ commutators.

By abuse of notation we use $=$ to denote that words or curves  in  a group are equal, freely equal or
 are homologous, that is equal  as elements of an integral homology basis. We write the integral homology additively. For simplicity we use the same notation for a curve, its equivalence class as an element of the fundamental group and its image in homology.

\section{Harvey's results and Extensions} \label{sec:HarveyThm}
Harvey used a list of automorphisms of $\Gamma_0$ which preserved the equivalence class of a cyclic surface kernel which he used to give the simplest representation for a surface kernel map in the equivalence class.
We extend his results here to arbitrary finite groups $G$ as this can be useful in finding adapted integral homology bases and generating sets.
We let $A_i=\phi(a_i); B_i = \phi(b_i), i= 1,...,g_0$ and $\xi_j = \phi(x_j), j = 1,...,r$.

Harvey lists automorphisms of $\Gamma_0$ by their effect on the $2g_0+r$ generators and then gives their corresponding effect on the images of the curves in $G$ under the representation $\phi: \Gamma_0 \rightarrow G$.


\subsection{We recall Harvey's list and actions.}


$\;\;\;$

$\;\;\;\;$

1. $\UU_1$: 

$\;\;$ $a_1 \rightarrow a_1b_1$

$\;\;$ $v \rightarrow v \;\; \forall \mbox{ generators }  v \ne a_1$

{\it Action on representations:}

$\;\;$ $A_1 \rightarrow A_1 +B_1$

$\;\;$ All other  images remain fixed.

2. $\UU_2$:

$\;\;$ $a_1 \rightarrow a_1b_1$

$\;\;$ $b_1 \rightarrow a_1^{-1}$

$\;\;$ $v \rightarrow v \; \forall \mbox{ generators }  v \ne a_1 \mbox{ or } b_1$

{\it Action on representations:}

$\;\;$ $A_1 \rightarrow A_1 + B_1$

$\;\;$ $B_1 \rightarrow  - A_1 $

 $\;\;$ All other images remain fixed.

3.  $\UU_3$\footnote{There is a typo on in the third line of the table on page 397 of \cite{Harvey2} }:

$\;\;$ $ x_j \rightarrow a_2x_ja_2^{-1}$,  $j = 1,...,r$

 $\;\;$ $ a_1 \rightarrow a_2a_1$

 $\;\;$ $ a_2 \rightarrow b_1a_2b_1^{-1}$

$\;\;$ $ b_2 \rightarrow a_2b_2a_2^{-1}b_1^{-1}$

 $\;\;$$ v \rightarrow v \; \forall \mbox{ generators }  v \ne a_1, a_2  \mbox{ or } b_2$

$\;\;$ All other images remain fixed.

{\it Action on representations:}

 $\;\;$ $A_1 \rightarrow A_1 + A_2$

$\;\;$ $B_2 \rightarrow B_2-B_1$

4. $\UU_4$:

 $\; \;$ $ x_r \rightarrow a_1x_ra_1^{-1}$

 $\;\;$ $a_1 \rightarrow [a_1, x_r^{-1}]a_1$

 $\;\;$ $b_1 \rightarrow b_1a_1^{-1}x_ra_1$

 $\;\;$ $v \rightarrow v  \; \forall \mbox{ generators }  v \ne a_1 \mbox{ or } b_1$

{\it Action on representations:}
 $\;\;$ $B_1 \rightarrow B_1 + \xi_r.$

All other images remain unchanged.

Since the group $G$ is abelian, the image of $x_r$ is not changed.
\subsection{Extensions}

We list the actions on representations of automorphisms of $\Gamma_0$ listed above  which work for any group $G$.

1. $V_1$:

$\;\;$ $a_1 \rightarrow a_1b_1$

$\;\;$ $v \rightarrow v \;\; \forall \mbox{ generators }  v \ne a_1$

{\it Action on representations:}

$\;\;$ $A_1 \rightarrow A_1B_1$

$\;\;$ All other  images remain fixed.

2. $V_2$:

$\;\;$ $a_1 \rightarrow a_1b_1$

$\;\;$ $b_1 \rightarrow a_1^{-1}$

$\;\;$ $v \rightarrow v \; \forall \mbox{ generators }  v \ne a_1 \mbox{ or } b_1$

 {\it Action on representations:}

$\;\;$ $A_1 \rightarrow A_1B_1$

$\;\;$ $B_1 \rightarrow A_1^{-1} $

 $\;\;$ All other images remain fixed.

 3. $V_3$:

$\;\;$ $ x_j \rightarrow a_2x_ja_2^{-1}$, $j = 1, ...
, r$

$\;\;$ $ a_1 \rightarrow a_2a_1$

$\;\;$ $ a_2 \rightarrow b_1a_2b_1^{-1}$

$\;\;$ $ b_2 \rightarrow a_2b_2a_2^{-1}b_1^{-1}$

$\;\;$ $ v \rightarrow v \; \forall \mbox{ generators }  v \ne a_1 \mbox{ or } b_1$

$\;\;$ All other images remain fixed.

{\it Action on representations:}

$\;\;$ $A_1 \rightarrow A_1A_2$

$\;\;$ $A_2 \rightarrow B_1A_2B_1^{-1}$

$\;\;$ $B_2 \rightarrow A_2B_2A_2^{-1}B_1^{-1}$

4. $V_4$:

 $\; \;$ $ x_r \rightarrow a_1x_ra_1^{-1}$

 $\;\;$ $a_1 \rightarrow [a_1,x_r^{-1}]a_1$

 $\;\;$ $b_1 \rightarrow b_1a_1^{-1}x_ra_1$

 $\;\;$ $v \rightarrow v  \; \forall \mbox{ generators }  v \ne a_1 \mbox{ or } b_1$

{\it Action on representations:} (assuming $\phi(a_1) \in \langle  \phi(x_r)\rangle$)

 $\;\;$ $B_1 \rightarrow B_1\xi_r$

 $\;\;$ All other images remain fixed.

\begin{thm} \label{thm:ext} Harvey's ${\UU}_1, \;  {\UU}_2  \; \mbox{and } \; {\UU}_3$ remain the same no matter the nature of $G$ (cyclic or abelian or not) but we write them as $V_1,V_2,V_3$ respectively.
 If $\phi(a_1) \in \langle  \phi(x_r)\rangle $   then ${\UU}_4$ will work. We write this as $V_4$ to indicate it has a modified application.
 If $G$ is abelian, then Harvey's $\UU_4$ or $V_4$  work without the requirement that $\phi(a_1) \in \langle  \phi(x_r)\rangle $

\end{thm}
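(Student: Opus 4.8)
The plan is to separate two logically independent issues: first, that each substitution $\UU_1,\ldots,\UU_4$ extends to an automorphism of the abstract group $\Gamma_0$, and second, the computation of the induced action on the generating vector. The first issue never mentions $G$. For each of the four maps one checks that the substitution carries the single long relation $(\Pi_{i=1}^{g_0}[a_i,b_i])x_1\cdots x_r=1$ and each torsion relation $x_i^{m_i}=1$ to a consequence of the relations, and that it admits a two-sided inverse substitution. This is exactly Harvey's verification in \cite{Harvey2}, which I would cite; the point to emphasize is that, because nothing in it refers to $G$, the maps $\UU_1,\UU_2,\UU_3$ are automorphisms of $\Gamma_0$ whatever $G$ is. This is precisely the assertion that they ``remain the same no matter the nature of $G$,'' and the same remark shows $\UU_4$ is always an automorphism of $\Gamma_0$; only its effect on representations can depend on $G$.

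For the action on representations I would use that whenever $\theta\in\mathrm{Aut}(\Gamma_0)$ the composite $\phi\circ\theta$ is an equivalent surface kernel map, and that its generating vector is obtained by applying the homomorphism $\phi$ to each transformed generator $\theta(a_i),\theta(b_i),\theta(x_j)$. Concretely this means substituting $A_i=\phi(a_i)$, $B_i=\phi(b_i)$, $\xi_j=\phi(x_j)$ into the defining words and multiplying in $G$; one checks the convention against $V_1$, where $\theta(a_1)=a_1b_1$ gives $\phi(\theta(a_1))=A_1B_1$, matching the listed action. For $V_1$ and $V_2$ the transformed generators contain no conjugations, so $\phi$ returns $A_1\to A_1B_1$ (and $B_1\to A_1^{-1}$ for $V_2$) with all other images fixed, valid in any $G$. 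For $V_3$ the transformed generators do involve conjugations, and applying $\phi$ returns images that are the products and conjugates of the $A_i,B_i$ appearing in the list; the essential feature is that these conjugations are recorded literally rather than collapsed as one may when $G$ is abelian, so the formulas survive for arbitrary $G$.

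The crux is $\UU_4$. Applying $\phi$ to its transformed generators yields
\[
\phi(\theta(x_r))=A_1\xi_rA_1^{-1},\qquad \phi(\theta(a_1))=[A_1,\xi_r^{-1}]A_1,\qquad \phi(\theta(b_1))=B_1A_1^{-1}\xi_rA_1.
\]
For these to collapse to Harvey's simple form ($\xi_r$ and $A_1$ unchanged, $B_1\to B_1\xi_r$) I need $A_1$ to commute with $\xi_r$: then $A_1\xi_rA_1^{-1}=\xi_r$, the commutator $[A_1,\xi_r^{-1}]$ is trivial, and $A_1^{-1}\xi_rA_1=\xi_r$. The hypothesis $\phi(a_1)\in\langle\phi(x_r)\rangle$ supplies exactly this, since $\langle\xi_r\rangle$ is cyclic and hence abelian, so any $A_1$ lying in it commutes with $\xi_r$. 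This is the one place where a hypothesis on $\phi$ (or on $G$) is genuinely needed, and it is the main obstacle to extending $\UU_4$ verbatim: in the non-abelian case the conjugation $A_1\xi_rA_1^{-1}$ moves the elliptic image off $\xi_r$ unless such a centralizing condition is imposed. Finally, if $G$ is abelian the commutativity of $A_1$ and $\xi_r$ is automatic, so the same collapse occurs with no assumption on $\phi(a_1)$, and this proves the last assertion for both $\UU_4$ and $V_4$.
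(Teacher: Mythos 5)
Your proposal is correct and takes essentially the same route as the paper: the paper's proof is the one-line instruction ``verify that the long defining relation still holds and that the elliptic relations are unchanged,'' and your argument is simply that verification carried out in full. Your identification of the crux --- that $\phi(a_1)\in\langle\phi(x_r)\rangle$ forces $A_1$ to commute with $\xi_r$ (a cyclic subgroup being abelian), which is exactly what makes the ${\UU}_4$ images collapse to $\xi_r\to\xi_r$, $A_1\to A_1$, $B_1\to B_1\xi_r$ --- is precisely the content the paper leaves implicit.
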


\begin{proof}
Verify that the long defining relation still holds and that the elliptic relations are unchanged.
\end{proof}

\subsection{A Further Automorphism}

\vskip .1in

 Harvey's map $\BB_j$ is given by

\vskip .1in

$x_i \rightarrow x_i$, $i=1,...,r$

$(a_i, b_i) \rightarrow (a_i, b_i)$ if $i \ne j,j+1$

$(a_j,b_j) \rightarrow (a_{j+1},b_{j+1})$

$(a_{j+ 1}, b_{j+ 1}) \rightarrow  (c_{j+1}a_jc_{j+1}^{-1}, c_{j+1}b_jc_{j+1}^{-1})$

where $c_{j+1} = [a_{j+1},b_{j+1}].$

\vskip .1in

{\it  And $\BB_j$ induces the mapping}

 $\xi_i \rightarrow \xi_i, \;  i= 1,...,r$

 $(A_i,B_i) \rightarrow (A_i,B_i),\; i \ne j,\; j+1$

 $(A_j,B_j) \rightarrow (A_{j+1},B_{j+1}),  i \ne j,j+1$

 $(A_{j+1},B_{j+1}) \rightarrow (A_j,B_j)$

 for $G$ a cyclic group where $c_{j+1}= [a_{j+1},b_{j+1}]$.

\vskip .2in

We call the new map $\hat{\BB_j}$.

We let  $C_{j+1}= [A_{j+1},B_{j+1}]$ and define the action of $\hat{\BB_j}$ by

 $\xi_i \rightarrow \xi_i, i= 1,...,r$

 $(A_i,B_i) \rightarrow (A_i,B_i), i \ne j,j+1$

 $(A_j,B_j) \rightarrow (A_{j+1},B_{j+1}),  i \ne j,j+1$

 $(A_{j+1},B_{j+1}) \rightarrow (C_{j+1}A_jC_{j+1}^{-1},C_{j+1}B_jC_{j+1}^{-1}).$

\begin{thm} \label{thm:furtherext}

          If $\phi(a_{j+1}) \in \langle \phi(b_{j+1}) \rangle$, then $\hat{\mathcal{B}}_j$ allows us to interchange the pairs $(A_i,B_i)$ and $(A_{j+1},B_{j+1}) $ if $i \ne j, j+1$ leaving the elliptic images fixed.


\end{thm}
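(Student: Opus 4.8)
The plan is to first use the hypothesis to kill the conjugating factor in the definition of $\hat{\BB}_j$, and then to realize the stated interchange as a composition of the resulting clean adjacent swaps. Since $A_{j+1}=\phi(a_{j+1})$ lies in $\langle B_{j+1}\rangle$ with $B_{j+1}=\phi(b_{j+1})$, it is a power of $B_{j+1}$, so $A_{j+1}$ and $B_{j+1}$ commute and $C_{j+1}=[A_{j+1},B_{j+1}]=1$ in $G$. Substituting $C_{j+1}=1$ into the defining action of $\hat{\BB}_j$ collapses both conjugations, so the map sends $(A_j,B_j)\mapsto(A_{j+1},B_{j+1})$ and $(A_{j+1},B_{j+1})\mapsto(A_j,B_j)$ while fixing every other $(A_l,B_l)$ and every $\xi_m$. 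Thus, under the hypothesis, $\hat{\BB}_j$ is a genuine transposition of the adjacent pairs in slots $j$ and $j+1$ that leaves all elliptic images fixed. I would record this first, along with the verification --- identical in form to the proof of Theorem \ref{thm:ext} --- that the underlying map of $\Gamma_0$ preserves the long relation and each elliptic relation $x_m^{m_m}=1$, so that it is a bona fide automorphism and the tabulated action on representations is legitimate.

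Next I would assemble the non-adjacent interchange by composition, taking $i<j+1$ first. The idea is to bubble the distinguished pair $(A_{j+1},B_{j+1})$ leftward into slot $i$ by applying $\hat{\BB}_j,\hat{\BB}_{j-1},\dots,\hat{\BB}_i$ in succession. The governing observation is that a single $\hat{\BB}_k$ conjugates only the pair crossing from the lower slot $k$ to the higher slot $k+1$, and it conjugates that pair by the commutator of whatever pair currently sits in slot $k+1$. Throughout this leftward passage the pair occupying the higher slot of each successive swap is $(A_{j+1},B_{j+1})$ itself, whose commutator is $C_{j+1}=1$; hence each of these swaps is clean in both directions. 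After the passage $(A_{j+1},B_{j+1})$ rests unchanged in slot $i$, the block $(A_i,B_i),\dots,(A_j,B_j)$ has shifted one slot to the right unchanged, and --- since every $\hat{\BB}_k$ fixes the $\xi_m$ --- all elliptic images are untouched. This already delivers a completely clean cyclic shift of the relevant slots.

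The remaining task is to turn this cyclic shift into the transposition by returning the displaced pair $(A_i,B_i)$ from slot $i+1$ to slot $j+1$ via $\hat{\BB}_{i+1},\dots,\hat{\BB}_j$, and this is where I expect the genuine difficulty to lie. On the return leg $(A_i,B_i)$ is now the pair crossing the intervening block $(A_{i+1},B_{i+1}),\dots,(A_j,B_j)$ rightward, so at each step it is conjugated by the commutator of the pair it passes; these commutators need not vanish, since the hypothesis constrains only $(A_{j+1},B_{j+1})$. The raw composite therefore returns $(A_i,B_i)$ to slot $j+1$ in the form $w(A_i,B_i)w^{-1}$ with $w=[A_j,B_j]\cdots[A_{i+1},B_{i+1}]$, while $(A_{j+1},B_{j+1})$ lands cleanly in slot $i$ and all other pairs and all elliptic images are fixed. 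The main obstacle is precisely to absorb this residual conjugation, and I would discharge it by invoking the convention recorded in Section \ref{sec:Julynotation} that the $G$-action, equivalently the surface kernel map, is defined only up to inner automorphism, so that $w(A_i,B_i)w^{-1}$ is accepted as the image of $(A_i,B_i)$ and the interchange is realized with the elliptic images fixed.

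Finally I would address $i>j+1$ and be candid about the asymmetry it exposes. The hypothesis is tailored to make the \emph{leftward} motion of $(A_{j+1},B_{j+1})$ clean, so it is the relocation of the distinguished pair to an earlier slot that proceeds without conjugation; interchanging it with a later slot forces $(A_{j+1},B_{j+1})$ itself to cross the intervening block rightward and pick up commutator conjugation, which I would handle either by the same up-to-inner-automorphism absorption or by noting that one cannot in general keep both pairs simultaneously clean, since the pair that must cross the intervening block necessarily absorbs its commutators. This last point is exactly the non-abelian phenomenon absent from Harvey's original $\BB_j$, where every commutator $C_{k+1}$ is already trivial.
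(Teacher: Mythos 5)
Your first paragraph is exactly the intended proof, and it is all that is needed: the hypothesis $\phi(a_{j+1})\in\langle\phi(b_{j+1})\rangle$ makes $A_{j+1}$ a power of $B_{j+1}$, hence $C_{j+1}=[A_{j+1},B_{j+1}]=1$, so the conjugations in the displayed action of $\hat{\BB}_j$ collapse and the induced map is the clean transposition of the pairs in slots $j$ and $j+1$, fixing every other pair and every elliptic image; the underlying map of $\Gamma_0$ is Harvey's $\BB_j$, already an automorphism, so only the induced action on representations is at issue. Note that the paper states this theorem without any written proof (its style matches the one-line proof of Theorem \ref{thm:ext}), and the phrase ``$(A_i,B_i)$ \dots if $i\ne j,j+1$'' in the statement should be read, consistently with the displayed action of $\hat{\BB}_j$ (which carries the same stray clause ``$i\ne j,j+1$'' on its middle line), as describing the pairs that are left fixed: the interchange asserted is of $(A_j,B_j)$ with $(A_{j+1},B_{j+1})$, not a non-adjacent interchange.

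Your remaining three paragraphs, which pursue the literal non-adjacent reading, contain a genuine gap: the residual conjugation on the return leg cannot be discharged by the ``up to inner automorphism'' convention. That convention in Section \ref{sec:Julynotation} concerns the action of $G$ on $\pi_1(S)$ and $\Gamma$, and in any case an inner automorphism would conjugate the \emph{entire} generating vector by one element of $G$, whereas your composite conjugates a single pair by $w=[A_j,B_j]\cdots[A_{i+1},B_{i+1}]$; these are different equivalences, so the exact interchange is not obtained. In fact the exact clean interchange of slots $i$ and $j+1$ is obstructed, not merely elusive: every automorphism of $\Gamma_0$ preserves the long relation, so both the old and the purported new generating vectors must satisfy it in $G$. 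Writing $N=[A_{i+1},B_{i+1}]\cdots[A_j,B_j]$ and using $C_{j+1}=1$, the old vector gives $L\,[A_i,B_i]\,N\,R=1$ and the target vector gives $L\,N\,[A_i,B_i]\,R=1$ with the same flanking factors $L$ and $R$, forcing $[A_i,B_i]$ to commute with $N$ --- which the hypothesis does not provide. So no composition of the $\hat{\BB}_k$ (indeed no automorphism of $\Gamma_0$ at all) can realize the literal statement in general while fixing all other images; this confirms that the adjacent-swap reading, which your first paragraph proves completely, is the correct content of the theorem.
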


\section{Background: Summary of the Reidemeister-Schreier theory} \label{sec:SRSum}

  In this section we first recall standard terminology, facts and theorems for the Reidemeister-Schreier Theory (see section 2.3 of \cite{MKS}.)
We choose a set of minimal Schreier right coset representatives for $\Gamma_0$ modulo $\Gamma$ as defined below and denote an arbitrary representative by $K$. We let $\tau$ be the rewriting system so that the generators of $\Gamma$ are given by
 $S_{K,a} = Ka{\overline{Ka}}^{-1}$ where $\overline{X}$ denotes the coset representative of $X$ and $a$ varies over a set of generators of $\Gamma_0.$  Our first result comes from the application of the Reidemeister-Schreier theorem to our situation (Theorem 2.9 page 94 \cite{MKS}). It would be nice to simplify the notation for this,  but I have yet to find a better notation.

\vskip .1in

First we summarize material from \cite{MKS}.

\subsubsection{Schreier Representatives} \label{sec:cosetreps}

Let $G$ be a finite group of order $n$,  $\Gamma_0$ a group with known presentation, $\phi$ a group  homomorphism of $\Gamma_0$ onto  $G$ and $\Gamma$ the kernel of $\phi$.  We let $K_1, ..., K_n$ be a set of right coset representatives for $\Gamma_0$ with $K$ denoting an arbitrary one of these chosen right-coset representatives.

Recall that the length of a coset is the length of the shortest word in the coset.

\begin{defn}

A  {\sl Schreier right coset function} is a right coset function  where the initial segment of any coset representative is again a right coset representative. It is a minimal if the length of any coset does not exceed the length of any coset it represents. 
 We call the set of cosets a {\sl Schreier} system and denote the coset of word $W$ in $\Gamma_0$ by ${\overline{W}}$.
\end{defn}

Note that we can always choose a set of {\sl minimal} Schreier representatives.
A rewriting process is  process that takes a word that is in $\Gamma$ but that is given in the generators of $\Gamma_0$ and writes it as a word in generators for $\Gamma$. A {\sl Reidemeister-Schreier rewriting process} is one that uses a Schreier system. Here we denote our rewriting process by $\tau$.

There are other ways to choose coset representatives. For example one may order the generators and their inverses and then use what is known as {\sl short lex order}.

\subsection{The rewriting system $\tau$}

We begin with a more general situation and assume $\Gamma_0$ has presentation given by generators $w_1,...,w_q$ with relations $R_u(w_1,...,w_v)$ for some integers $q$, $u$  and $v$ where the $R_u$ are words in the generators.


We remind the reader that a Reidemeister-Schreier rewriting process $\tau$ writes a word in the generators of $\Gamma_0$ that lies in $\Gamma$ in terms of the generators $S_{K,v}$ for $\Gamma$ where $K$ runs over a complete set of Schreier  minimal coset representatives. 

We let ${\overline{X}}$ denote the coset representative of $X$ in $\Gamma_0$. That is, the element $K$ where $\phi(K) = \phi(X)$. The element $S_{K,v} \in \Gamma$ is the element $Kv{\overline{Kv}}^{-1}$.

The rewriting process is defined as follows:

For each integer $i =1, ..., u$ let $d_i$ be one of  $\{w_1,..., w_q\}$.

Let $X= d_1^{\epsilon _i} \dots d_u^{\epsilon_u}$, where each $\epsilon_i$ is either $+1$ or $-1$.
Then $$\tau(X) = \Pi_{i=1}^u S_{V_i,d_i}^{\epsilon_i},$$
 where $V_i$ depends upon $\epsilon_i$. Namely, if $\epsilon_i = +1$, then $V_i = {\overline{d_1^{\epsilon_1} \dots d_{i-1}^{\epsilon_{i-1}}} }$ and if
$\epsilon_i = -1$, then $V_i = {\overline{d_1^{\epsilon_1} \dots d_{i-1}^{\epsilon_{i-1}} d_i^{-1}}}$.

We next apply the rewriting system to the case for $\Gamma$ and $\Gamma_0$ under consideration in this paper.

\section{Application of the Reidemeister-Schreier Theorem} \label{sec:SR}

Apply  the Reidemeister-Schreier Theorem to our situation to obtain:

\begin{thm}
\label{thm:SR} {\rm(Presentation for $\Gamma$ with Schreier generators)}

\noindent Let $\Gamma_0$ have generators
 $$ a_1,...,a_{g_0}, b_1,...,b_{g_0}, x_1, ...x_r $$
and relations
 $$ R=  \Pi_{i=1}^{g_0} [a_i,b_i]  \cdot x_1 \cdots x_r =1,    \;\;\; x_i^{m_i}=1$$

\noindent and let $\Gamma$ be the subgroup of $\Gamma_0$ with $G$ as above isomorphic to  $\Gamma_0 / \Gamma$.
\vskip .03in

\noindent Then $\Gamma$ can be presented with generators 

$$  S_{K,a_i}, \;\; S_{K,b_i}, i =1,...,g_0,   \mbox{ and } \;\; S_{K,x_j},  j=1,...,r$$
and relations

$$\tau(KRK^{-1}) = 1,\;\; \tau(Kx_j^{m_j}K^{-1})=1,\;\; j = 1,...,r, \;\; S_{M,v}=1$$
where $M$ is  Schreier representative and $v$ is  any  generator such that $Mv={\overline{Mv}}$

\end{thm}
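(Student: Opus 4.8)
The plan is to invoke the general Reidemeister-Schreier theorem (Theorem 2.9, page 94 of \cite{MKS}) directly, specializing it to the presentation \eqref{eq:presentation} of $\Gamma_0$ and to the subgroup $\Gamma=\ker\phi$. First I would record that the hypotheses of the general theorem are met: because $G$ is finite we have a finitely presented $\Gamma_0$ and a finite-index subgroup $\Gamma$, and we have already observed that a set of minimal Schreier right coset representatives $\{K\}$ always exists. This furnishes a Schreier transversal together with the associated rewriting process $\tau$ described above in Section~\ref{sec:SRSum}, which is exactly the data the theorem requires.

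Next I would match the abstract data to our situation. Take the generating set $w_1,\dots,w_q$ of $\Gamma_0$ to be $a_1,\dots,a_{g_0},b_1,\dots,b_{g_0},x_1,\dots,x_r$, and take the defining relators $R_u$ to be the single long relator $R=\Pi_{i=1}^{g_0}[a_i,b_i]\cdot x_1\cdots x_r$ together with the $r$ elliptic relators $x_j^{m_j}$, $j=1,\dots,r$. The general theorem then asserts that $\Gamma$ is generated by the Schreier generators $S_{K,w}=Kw\overline{Kw}^{-1}$, as $K$ ranges over the transversal and $w$ over the generators of $\Gamma_0$; these are precisely the listed $S_{K,a_i},\,S_{K,b_i},\,S_{K,x_j}$.

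For the relations, the general theorem supplies two families, which I would simply read off. The first consists of the trivial Schreier generators: whenever $S_{M,v}$ satisfies $Mv=\overline{Mv}$, equivalently when $Mv$ is itself the chosen representative of its coset, the word $S_{M,v}$ is the identity, yielding the relations $S_{M,v}=1$. The second consists of the rewrites of the relators conjugated by coset representatives, $\tau(KR_uK^{-1})=1$, taken over all $K$ in the transversal and all $R_u$; in our case these are exactly $\tau(KRK^{-1})=1$ and $\tau(Kx_j^{m_j}K^{-1})=1$ for $j=1,\dots,r$. This completes the identification with the statement, so the theorem follows by direct specialization.

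The step requiring the most care, rather than being a genuine obstacle, is confirming that the rewriting process $\tau$ defined above coincides with the one in the hypotheses of \cite{MKS}, so that $\tau(KRK^{-1})$ is literally the expression the theorem produces. This amounts to checking that our choice of $V_i$ (the representative read off the initial segment, with the $\epsilon_i=-1$ case handled by appending $d_i^{-1}$) is the standard one, and that $\tau$ is a bona fide rewriting process, i.e. that it carries each element of $\Gamma$, presented as a word in the generators of $\Gamma_0$, to a word in the $S_{K,v}$ representing the same element. Since both points are definitional, no further computation is needed and the theorem is immediate.
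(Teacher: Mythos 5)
Your proposal is correct and matches the paper's approach: the paper offers no separate argument, simply stating the theorem as the direct specialization of Theorem 2.9 of \cite{MKS} to the presentation \eqref{eq:presentation} with the Schreier transversal and rewriting process $\tau$ already fixed in Section \ref{sec:SRSum}. Your identification of the generators, the relators $R$ and $x_j^{m_j}$, and the two families of relations $S_{M,v}=1$ and $\tau(KR_uK^{-1})=1$ is exactly that specialization, spelled out in more detail than the paper itself provides.
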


\begin{cor} \label{cor:numbers}

If $\tau$ is a Reidemeister-Schreier rewriting process,

then $\Gamma$ can be presented as a group with $2ng_0 + nr$  generators and

$n - (n-1) + \Sigma_{j=1}^r {\frac{n}{m_i}}$ non-conjugate relations.
\end{cor}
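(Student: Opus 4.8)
The plan is to read both counts off the presentation of Theorem \ref{thm:SR} and then pass from the raw list of relators to a non-conjugate one. First I would count generators. Theorem \ref{thm:SR} produces one Schreier generator $S_{K,v}$ for each pair consisting of a right coset representative $K$ and a generator $v$ of $\Gamma_0$. Since $\Gamma_0/\Gamma\cong G$ has order $n$, there are exactly $n$ representatives $K$, while $\Gamma_0$ has the $2g_0+r$ generators $a_1,\dots,a_{g_0},b_1,\dots,b_{g_0},x_1,\dots,x_r$. Hence the generators $S_{K,a_i},S_{K,b_i}$ $(i=1,\dots,g_0)$ and $S_{K,x_j}$ $(j=1,\dots,r)$ number $(2g_0+r)\,n=2ng_0+nr$, which is the first assertion.

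Next I would sort the relations of Theorem \ref{thm:SR} into their three families and count each up to conjugacy in $\Gamma$. The trivial relations $S_{M,v}=1$ (those with $Mv=\overline{Mv}$) correspond to the edges of the prefix tree cut out by a minimal Schreier transversal: the $n$ representatives, being closed under taking initial segments, form a rooted tree on $n$ vertices, so there are exactly $n-1$ of them. For the elliptic relations I would use right multiplication by $x_j$ on the $n$ cosets. Because $\xi_j=\phi(x_j)$ has order $m_j$, every orbit has size $m_j$, so there are $n/m_j$ orbits, and one computes $\tau(Kx_j^{m_j}K^{-1})=S_{K,x_j}S_{\overline{Kx_j},x_j}\cdots S_{\overline{Kx_j^{\,m_j-1}},x_j}$. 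Starting this product at a different point of the same orbit yields a cyclic permutation, hence a conjugate in $\Gamma$, so each orbit contributes a single non-conjugate relation; and since no two distinct $x_i$ are conjugate in $\Gamma_0$, relators from different branch points stay non-conjugate. This gives $\sum_{j=1}^{r} n/m_j$ non-conjugate elliptic relations.

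It remains to handle the $n$ lifts $\tau(KRK^{-1})$ of the long relation, and this is the step I expect to be the main obstacle. The term $n-(n-1)$ asserts that these $n$ relators collapse to a single essential relation. I would prove this by a Tietze/Reidemeister reduction: collapsing the spanning tree of the coset graph, that is, imposing the $n-1$ trivial relations $S_{M,v}=1$, expresses $n-1$ of the Schreier generators as words in the remaining ones, and each such elimination consumes one of the $n$ lifted long relators, leaving precisely $n-(n-1)=1$. Geometrically, viewing the $n$-fold cover of the cell structure on $S_0$ given by (\ref{eq:presentation}), the $n$ lifted $2$-cells fold down to the single $2$-cell of the closed surface $S$ after the $n-1$ edge collapses. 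Assembling the three families produces $n-(n-1)+\sum_{j=1}^{r} n/m_j$ non-conjugate relations.

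As a consistency check I would verify that the full, unreduced presentation — with $2ng_0+nr$ generators and the $n+(n-1)+nr$ relators of the three families — has presentation $2$-complex equal to the $n$-fold cover of the presentation complex of $\Gamma_0$, whose Euler characteristic is $n(2-2g_0)$; a direct computation $1-(2ng_0+nr)+\bigl(n+(n-1)+nr\bigr)=n(2-2g_0)$ confirms the raw family counts before any reduction. Matching this against the genus $g$ predicted by the Riemann--Hurwitz relation (\ref{eq:RH}) both corroborates the folding of the long relators and guards against miscounting the conjugacy classes of elliptic relators, which is the most error-prone part of the argument.
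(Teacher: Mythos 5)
Your generator count, your prefix-tree count of the $n-1$ trivial relations, and your orbit argument for the elliptic relators all match the paper's proof: the paper gets the $n-1$ from exactly the same initial-segment observation (if $N=c_1\cdots c_q$ is a representative, so is $M=c_1\cdots c_{q-1}$, whence $S_{M,c_q}=1$), and gets $\sum_{j=1}^r n/m_j$ because each relator $\tau(Kx_j^{m_j}K^{-1})$ ``contains $m_j$ elements''; your cyclic-permutation/conjugacy justification is a more careful version of that step.

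The genuine gap is in your third paragraph, which is also where you depart from the paper. The paper reads the formula as $n+\sum_j n/m_j-(n-1)$: it keeps all $n$ relators $\tau(KRK^{-1})$ as non-conjugate relations and subtracts the $n-1$ trivial relations from the grand total; it never claims the long relators collapse to one. Your proposed mechanism for such a collapse fails: the Tietze move that eliminates a tree generator $S_{M,v}$ uses up the relation $S_{M,v}=1$ itself, not a long relator --- it only erases occurrences of $S_{M,v}$ inside the long relators, so after the whole spanning tree of the coset graph is collapsed, all $n$ long relators are still present. (You can see this in the paper's $\mathcal{S}_3$ computation: all six relators $\tau(KRK^{-1})$ survive the substitutions $S_{1,x_i}=1$, and are merged only afterwards by solving some of them for the generators $S_{*,x_8}$, and later $S_{E,x_5}$, and substituting --- that is, by spending $n-1$ of the long relators to eliminate $n-1$ \emph{non-tree} generators. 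The relevant $n-1$ is a spanning tree of the dual graph on the $n$ lifted $2$-cells, which agrees with your $n-1$ only numerically.) The same confusion is in your geometric sentence: collapsing edges of the $1$-skeleton never merges $2$-cells; what assembles the $n$ lifted polygons into one fundamental polygon for $S$ is deleting $n-1$ edges that each lie on two distinct polygons. Finally, your own Euler-characteristic test would have caught this: a presentation with one long relation but all $2ng_0+nr$ generators gives, in the paper's example, $\chi=1-48+(1+22)=-24\neq-14=2-2g$, so the collapse-to-one reading is incompatible with the stated generator count; merging the long relators necessarily costs $n-1$ further generator eliminations, a point the paper's bookkeeping (and the corollary's own phrasing) glosses over but your proof, as written, asserts incorrectly.
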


\begin{proof}
We note that the relations $\tau(KRK^{-1})$ are $n$ non-conjugate relations, and the $\tau(Kx_j^{m_j}K^{-1})$ add another non-conjugate $\Sigma_{i=1}^r {\frac{n}{m_i}}$ because each of the latter relations contains $m_i$ elements.  There must be ${\frac{n}{m_i}}$ distinct non-conjugate relations. However, there are $n-1$ minimal Shreier coset representatives other than the identity and these give the relations of type $S_{M,v}=1$ so that $n-1$ must be subtracted from the sum.
More precisely, if $N= c_1c_2 \cdots c_{q-1}c_q$ is a chosen coset representative, then its initial segment $M = c_1 \cdots c_{q-1}$ is also a chosen coset representative and thus $S_{M,c_q} =1$.

We note that for fixed $j$ and $m_j$, the $\tau(Kx_j^{m_j}K^{-1})$ and $\tau({\overline{Kx_j^n}}x_j^{m_j}{\overline{Kx_j^n}}^{-1})$ yield the same set of relations.

\end{proof}

\subsection{Action of $G$ on the kernel} \label{sec:kernelLIFT}

\vskip .03in
Next we translate from the language of algebra to curve lifting.
\vskip .03in
We note that the action of $G$ or $\Gamma_0$ on $\Gamma$ is given by conjugation and if we set $\phi(K) = g_K$, then
for any word $W$ in $\Gamma$, we define the action of $g_K$ and $g_{\phi(K)}$ by the equation $g_K(W) = g_{\phi(K)}(W) = \tau(KWK^{-1})$.

\begin{rem} For any generator $V$ of $\Gamma_0$, $S_{1,V}$
corresponds to the lift of
$V{\overline{V}}^{-1}$ to initial point $p \in S$  where $\pi(p) = p_0 \in S_0$. Let $W$ be a curve in $\Gamma_0$ and ${\tilde{p}}$ its end point when lifted to $S$ with initial point $p$.  Then $S_{W,V}$ is the lift of $V{\overline{V}}^{-1}$ to a curve with initial point, call it  ${\tilde{p}}$ and $g_U(S_{W,V})$ is the lift of $S_{W,V}$ to a curve whose initial point is the end point of $U$ when lifted to $p$.
\end{rem}

\begin{cor}
\label{lem:action} Let $X$ be a coset representative from a Schreier system of right coset representatives and $S_{U,V}$ an element of the kernel.
Assume that $g_X$ represents the action of $\phi(X)$ on the kernel $\Gamma$. Then
$g_X(S_{U,V}) = S_{{\overline{XU}},V}$.
\end{cor}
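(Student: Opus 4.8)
The plan is to start from the definition $g_X(S_{U,V})=\tau(X\,S_{U,V}\,X^{-1})$ and to isolate the single Schreier generator that results after rewriting. I would run two parallel arguments: a geometric one via curve lifting, following the Remark that precedes the statement, and an algebraic one using the multiplicative behavior of the Reidemeister--Schreier process $\tau$. The geometric picture is the cleaner of the two, so I would present it as the main proof and use the algebraic computation as a check.

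Geometric route. By the preceding Remark, $S_{U,V}$ is the lift of the loop $V\overline{V}^{-1}$ whose initial point $\widetilde p$ is the endpoint of the lift of $U$ started at $p$; since $U$ is a coset representative, this initial point is precisely the point of $\pi^{-1}(p_0)$ labelled by the coset $\overline U=U$, i.e.\ by $\phi(U)\in G$. Applying the deck transformation attached to $\phi(X)$ carries $\widetilde p$ to the point of $\pi^{-1}(p_0)$ labelled by $\phi(X)\phi(U)=\phi(XU)$, which is the endpoint of the lift of $XU$, i.e.\ the point labelled by the coset $\overline{XU}$. The image curve is then the lift of the same loop $V\overline{V}^{-1}$ with this new initial point, which by the definition in the Remark is exactly $S_{\overline{XU},V}$. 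This gives the asserted equality.

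Algebraic route. Writing $\tau_K$ for the variant of the process that reads a word beginning at the coset $K$ (so that $\tau=\tau_{\overline 1}$), one has the block identity $\tau_K(W_1W_2)=\tau_K(W_1)\,\tau_{\overline{KW_1}}(W_2)$ together with $\tau_K(V)=S_{K,V}$ for a single generator $V$; a one-line telescoping check shows $\tau_K(W)=K\,W\,\overline{KW}^{-1}$ as an element of $\Gamma_0$. I would expand $X\,S_{U,V}\,X^{-1}=X\cdot U\cdot V\cdot \overline{UV}^{-1}\cdot X^{-1}$ and rewrite block by block, tracking the running coset $\overline 1\to X\to\overline{XU}\to\overline{XUV}\to X\to\overline 1$ (the last two because $S_{U,V}\in\Gamma$ and $X$ is a representative). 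The factor produced when the letter $V$ is read, namely at running coset $\overline{XU}$, is precisely $S_{\overline{XU},V}$, which is the claimed answer. The two outer blocks from $X$ and $X^{-1}$ are read along a coset representative and its prefixes, so they contribute only the redundant generators $S_{M,v}$ with $Mv=\overline{Mv}$ that are trivialized by the relations of Theorem \ref{thm:SR}.

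The main obstacle is the remaining two blocks, $\tau_X(U)$ and $\tau_{\overline{XUV}}(\overline{UV}^{-1})$: these are read along the representatives $U$ and $\overline{UV}$ but from a \emph{shifted} starting coset, and they are in general nontrivial, flanking the central factor as $g_X(S_{U,V})=\tau_X(U)\cdot S_{\overline{XU},V}\cdot \tau_{\overline{XUV}}(\overline{UV}^{-1})$. Their product simply realizes the change of base point forced by the conjugation, so the statement must be read as an equality of the induced \emph{outer} automorphisms --- equivalently, modulo the inner automorphisms under which, as noted in Section \ref{sec:Julynotation}, the action of $G$ is only well defined --- or, most transparently, as the combinatorial statement that $g_X$ permutes the generators $S_{U,V}$ by $U\mapsto\overline{XU}$ for fixed $V$. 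It is exactly here that the geometric description does the work for free: relifting at the moved point builds the correct base point directly into $S_{\overline{XU},V}$, so no separate reconciliation of the flanking blocks is needed. Confirming that these flanking contributions amount only to this inner/base-point ambiguity is the single delicate point of the argument.
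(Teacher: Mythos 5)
Your two routes are exactly the paper's own: the corollary is given no proof beyond the curve-lifting Remark that precedes it, and the Remark that follows it points to precisely the $\tau$-computation you carry out, so in approach you and the paper coincide. Your algebraic bookkeeping is also correct, and the flanking blocks you isolate are real: one has
$\tau(X S_{U,V} X^{-1}) = \tau\bigl(XU\overline{XU}^{-1}\bigr)\, S_{\overline{XU},V}\, \tau\bigl(X\,\overline{UV}\,\overline{XUV}^{-1}\bigr)^{-1}$,
and the outer factors do not vanish in general. In the paper's own $\mathcal{S}_3$ example, take $X=x_1$, $U=x_3$, $V=x_4$: then $S_{B,x_4}=x_3x_4$ and $\tau(x_1x_3x_4x_1^{-1})=S_{A,x_3}S_{D,x_4}$, where $S_{A,x_3}=S_{D,x_3}^{-1}\neq 1$ (from $\tau(x_1x_3^2x_1^{-1})=S_{A,x_3}S_{D,x_3}=1$). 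So the literal equality asserted in the corollary fails, and you have correctly detected an imprecision in the statement itself, one which propagates into the computations of Proposition \ref{prop:matrices}.

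The gap in your proposal is the step you yourself flag as delicate: the flanking factors are \emph{not} an inner-automorphism or base-point ambiguity, and the statement cannot be rescued by reading it in the outer automorphism group. The corrections depend on the pair $(U,V)$, not only on $X$; worse, the assignment $S_{U,V}\mapsto S_{\overline{XU},V}$ is not even injective on $\Gamma$: with $X=U=x_3$ and $V=x_1$ it sends $S_{B,x_1}$ to $S_{1,x_1}=1$, although $S_{B,x_1}=S_{E,x_1}^{-1}\neq 1$ (from $\tau(x_3x_1^2x_3^{-1})=S_{B,x_1}S_{E,x_1}=1$, with $S_{E,x_1}$ one of the paper's $16$ basis elements), while honest conjugation gives $\tau(x_3S_{B,x_1}x_3^{-1})=S_{B,x_8}^{-1}\neq 1$. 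Conjugation composed with any inner automorphism is still an automorphism and cannot kill a nontrivial element, so no interpretation of $g_X$ ``up to inner automorphisms'' makes the formula agree with conjugation; the discrepancy does not even vanish in homology (in the first example it is the class of $S_{D,x_3}^{-1}$, a member of the basis claimed in Corollary \ref{cor:adaptedgens}). What your geometric argument actually proves --- and all that is true --- is the unbased statement: the deck transformation attached to $\phi(X)$ carries the lift of $V\overline{V}^{-1}$ based at $\tilde p_U$ to the lift based at $\tilde p_{\overline{XU}}$. Transporting this to the based elements $S_{U,V}\in\Gamma$ unavoidably reintroduces the tree-path corrections above, so a correct algebraic form of the corollary must either retain the two flanking factors or be phrased for unbased lifts (free homotopy classes); as an equality in $\Gamma$, even modulo inner automorphisms, it is false. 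Your computation is right up to, but not including, the reconciliation you marked as the delicate point --- and that reconciliation cannot be carried out.
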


\begin{rem} We note that this can also be shown using the $\tau$ notation and properties found on page 89 of \cite{MKS} by calculating $\tau(XUV{\overline{UV}}^{-1}X^{-1})$ and reducing.

\end{rem}

\subsection{Adapted homology bases} \label{sec:homology}
\begin{defn} \label{def:AHB} {\rm {Adapted   Homology Basis}}

Let $\gamma$ be an arbitrary curve in $\mathcal{B}$, an integral homology  basis,  $H_1(S)$,
 for $S$,  and let $G$ be  a group of conformal automorphisms of $S$ of order $n$ or equivalently a finite subgroup of the mapping-class group. Then a set of $2g$ generators  ${\mathcal{S_B}}$
 for the integral homology  $\mathcal{B}$,  is {\sl adapted to} $G$ if for each $\gamma \in {\mathcal{S_B}}$ one of the following occurs:

\begin{enumerate}
 \item \label{item:prop1}  $\gamma$ and $g(\gamma)$ are in ${\mathcal{S_B}}$ for all $g \in G$ and $g(\gamma) \ne \gamma$.

 \item $\gamma$ and $h^j(\gamma)$ are  in ${\mathcal{S_B}}$  for all $j=0, 1,...,m_{i-2}$ where $ h \in G$ is of order $m_i$, and
           \label{item:prop2}
      $$h^{m_{i-1}}({\gamma})  =  - (\gamma + h(\gamma) + \cdots  + h^{m_{i-2}}(\gamma)).$$
 Further
       for each coset representative, $g_h$,  for $G$ modulo $\langle h \rangle$, we have that
         $g_h(\gamma)$ and $(g_h \circ h^j)(\gamma)$ are in the set  for all $j=0, 1,...,m_{i-2}$ and
         $$(g_h \circ h^{m_{i-1}})({\gamma})  =  - (g_h(\gamma) + (g_h \circ h)(\gamma) + \cdots (g_h \circ h^{m^{i-2}})(\gamma)$$

     \item $\gamma = h^r(\gamma_0)$ where $\gamma_0$ is one of the curves in item \ref{item:prop2} above.

         \item \label{item:prop3} $g(\gamma) = \gamma$ for all $g \in G_0$, $G_0$ a subgroup of $G$ of order $m$.

          All of the other $n/m$ images of $\gamma$ under $G$ are fixed appropriately by conjugate elements or  by elements representing the cosets of $G/{G_0}$ and are also in ${\mathcal{S_B}}$.
\end{enumerate}

\end{defn}
\section{The symmetric group ${\mathcal{S}}_3$ and its multiplication table} \label{sec:S_3}

We begin with the $6$ generators:

        $id$,   $A =(1,2)$, $B = (2,3)$, $C=(1,3)$, $D=(1,2,3)$,  $E= (1,3,2)$

Multiplication gives
 \begin{itemize}
 \item $A$: $\;$   $A^2=id, AB = D, AC= E, AD = B, AE = C$
 \item $B$:  $\;$ $BA= E, B^2=id, BC = D, BD= C, BE = A$
 \item $C$: $\;$ $CA =D, CB=E, C^2 = id, CD= A, CE = B$
 \item $D$: $\;$ $DA =C, DB=A, DC= B, D^2=E, DE = id$
 \item $E$: $\;$ $EA=B, EB = C, EC = A, ED=id, E^2=D$
\end{itemize}

 \subsection{Coset Representatives:}
We assume first that $g_0=0$ and  that $\Gamma_0$ has presentation

$$\langle x_1, x_2,x_3, x_4,x_5,x_6,x_7, x_8 \;  | \; x_1x_2x_3x_4x_5x_6x_7 x_8=1; x_i^2=1, i=1,... 6;  x_7^3 = x_8^3=1\rangle .$$

We assume that $\phi(x_1) = \phi(x_2) = A$, $\phi(x_3) = \phi(x_4) = B$, $\phi(x_5) = \phi(x_6) =C, $ $\phi(x_7) = D$, $\phi(x_8) = E$.
We are using $A, B, C, D$ and $E$ to represent the right coset representatives.

       We pick coset representatives $x_1,x_3,x_5, x_7, x_8$ whose images are respectively  $A$,$B$, $C$, $D$, $E$

\subsection{Generators and Relations}

The generators  of $\Gamma$ are
\vskip .02in
$S_{1,x_1}, S_{1,x_2},S_{1,x_3},S_{1,x_4},S_{1,x_5},S_{1,x_6},S_{1,x_7},S_{1,x_8}$

$S_{A,x_1}, S_{A,x_2},S_{A,x_3},S_{A,x_4},S_{A,x_5},S_{A,x_6},S_{A,x_7},S_{A,x_8}$

$S_{B,x_1}, S_{B,x_2},S_{B,x_3},S_{B,x_4},S_{B,x_5},S_{B,x_6},S_{B,x_7},S_{B,x_8}$

$S_{C,x_1}, S_{C,x_2},S_{C,x_3},S_{C,x_4},S_{C,x_5},S_{C,x_6},S_{C,x_7},S_{C,x_8}$

$S_{D,x_1}, S_{D,x_2},S_{D,x_3},S_{D,x_4},S_{D,x_5},S_{D,x_6},S_{D,x_7},S_{D,x_8}$

$S_{E,x_1}, S_{E,x_2},S_{E,x_3},S_{E,x_4},S_{E,x_5},S_{E,x_6},S_{E,x_7},S_{E,x_8}$
\vskip .2in

The relations are
\vskip .1in
$\tau(R) =1$

$\tau(ARA^{-1})=1$

 $\tau(BRB^{-1})=1$

  $\tau(CRC^{-1})=1$

$\tau(DRD^{-1})=1$

$\tau(ERE^{-1})=1$

$\tau(x_i^2) =1$ $i =1,...,6$

$\tau(x_7^3)=1$

$\tau(x_8^3)=1$
\subsection{Results of Computations} \label{sec:resultsofcomp}

We compute $$\tau(x_7^3) = S_{1,x_7}S_{D,x_7}S_{E,x_7}.$$
$$\tau(x_1x_7^3x_1^{-1}) = S_{A,x_7}S_{B,x_7}S_{C,x_7}.$$

Since  $$S_{1,x_7} = 1, S_{D,x_7}^{-1} =S_{E,x_7}.$$
and
 $$S_{A,x_7}S_{B,x_7}= S_{C,x_7}^{-1}.$$

\subsection{Genus count}
$g_0=0$ $n = 6$ There are $6$ elliptics of order $2$ and $2$ of order $3$. We have

$$2g-2= 6(-2) + 6 ( 6(1-{\frac{1}{2}} = 6(2 (1- {\frac{1}{3}} = -12 + 2(4) +18 = -12 + 26$$ so that

$$2g = -10 +26 =16.$$ Thus

$$g =8. $$

\section{Detailed Calculations for ${\mathcal{S}}_3$}

Using $\tau(R) =1 $ and $\tau(x_Rx_1^{-1}) =1$  and combining with the $\tau(x_8^3)=1$ and $\tau(x_1x_8^3x_1^{-1})=1$ and then  using the solutions for $S_{*,x_8}=1$ where $* \in \{ 1, A,B,C,D,  E \}$, we have two relations.
\vskip .2in
The first is:

$$ S_{A,x_1}S_{1,x_2}S_{A,x_3}S_{D,x_4}S_{A,x_5}S_{E,x_6}S_{A,x_7} \times $$  

$$S_{B,x_1}S_{E,x_2}S_{B,x_3}S_{1,x_4}S_{B,x_5}S_{D,x_6}S_{B,x_7} \times $$ 

$$S_{C,x_1}S_{D,x_2}S_{C,x_3}S_{E,x_4}S_{C,x_5}S_{1,x_6}S_{C,x_7} =1$$


\vskip .2in

 Solving for $S_{D,x_8}^{-1}$,  $S_{E,x_8}^{-1}$,  and $S_{1,x_8}^{-1}$ gives us the 
 second combined three rows:
\vskip .2in
$$S_{1,x_1}S_{A,x_2}S_{1,x_3}S_{B,x_4}S_{1,x_5}S_{C,x_6}S_{1,x_7} \times $$
$$S_{Dx_1}S_{C,x_2}S_{D,x_3}S_{A,x_4}S_{D,x_5}S_{B,x_6}S_{D,x_7} \times $$
$$S_{E,x_1}S_{B,x_2}S_{E,x_3}S_{C,x_4}S_{E,x_5}S_{A,x_6}S_{E,x_7} =1  $$

\vskip .2in
Using $S_{A,x_1} =1=S_{1,x_1}$
$S_{1,x_3} = 1 = S_{C,x_3}$
$S_{D,x_7}  S_{E,x_7}^{-1}$
$S_{1,x_7} =1 $ and
$S_{1,x_5} = 1 =S_{C,x_5}$ we obtain

$$S_{1,x_2}S_{A,x_3}S_{D,x_4}S_{A,x_5}S_{E,x_6}S_{A,x_7} \times $$   

$$S_{C,x_1}S_{D,x_2}S_{C,x_3}S_{E,x_4}S_{1,x_6}S_{C,x_7} =1$$

  $$S_{A,x_2}S_{B,x_4}S_{C,x_6}S_{1,x_7} \times$$
$$S_{D, x_1}S_{C,x_2}S_{D,x_3}S_{A,x_4}S_{D,x_5}S_{B,x_6}S_{D,x_7} \times $$
$$S_{E,x_1}S_{B,x_2}S_{E,x_3}S_{C,x_4}S_{E,x_5}S_{A,x_6}S_{E,x_7} =1  $$

\vskip .2in
We combine
 into one relation by identifying $S_{1,x_2}$ and its inverse $S_{A,x_2}$
\vskip .2in

We obtain the relation $\mathcal{R}$:

$$(S_{B,x_4}S_{C,x_6}S_{1,x_7} \times $$

$$S_{D, x_1}S_{C,x_2}S_{D,x_3}S_{A,x_4}S_{D,x_5}S_{B,x_6}S_{D,x_7} \times $$

$$S_{E,x_1}S_{A,x_7}S_{E,x_3}S_{C,x_4}S_{E,x_5}S_{A,x_6}S_{E,x_7})^{-1}$$

$$\times S_{A,x_3}S_{D,x_4}S_{A,x_5}S_{E,x_6}S_{A,x_7} \times$$   
$$S_{B,x_1}S_{E,x_2}S_{B,x_3}S_{1,x_4}S_{B,x_5}S_{D,x_6}S_{B,x_7} \times $$ 
$$S_{C,x_1}S_{D,x_2}S_{E,x_4}S_{1,x_6}S_{C,x_7} =1$$

\vskip .2in
Use the fact that $S_{1,x_8}=1$, to solve for $S_{E,x_5}$.
\vskip .1in
That is, use
$S_{E,x_1}S_{B,x_2}S_{E,x_3}S_{C,x_4}S_{E,x_5}S_{A,x_6}S_{E,x_7} = S_{E,x_5}^{-1}$ and then substitute into the equations above to obtain  the $16$ generators that together with their inverses appear in the one relation:

$$S_{B,x_4}, S_{C,x_6}, S_{D, x_1}, S_{C,x_2}, S_{D,x_3}, S_{C,x_7},S_{A,x_4}, S_{D,x_5}$$

$$ S_{B,x_6}, S_{D,x_7}, S_{E,x_1},S_{A,x_7}, S_{E,x_3}S_{C,x_4},  S_{A,x_6}, S_{E,x_7}$$

Denote the new relation from which $S_{E,x_5}$ has been eliminated by $${\mathcal{RR}}.$$

We note that $S_{1,x_7}=1$ and $S_{E,x_7} = S_{D,x_7}^{-1}$

We conclude:

\begin{cor} \label{cor:gen-rel}
$\Gamma$ is generated by the $16$ elements

$$S_{B,x_4}, S_{C,x_6}, S_{A, x_7}, S_{C,x_2}, S_{D,x_1}, S_{D,x_3}, S_{C,x_7}, S_{A,x_4}, S_{D,x_5}$$

$$ S_{B,x_6}, S_{D,x_7}, S_{E,x_1},S_{E,x_3}S_{C,x_4},  S_{A,x_6}S_{E,x_7}$$ and has a single defining relation, the
relation $\mathcal{RR}$.
\end{cor}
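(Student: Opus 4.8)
The plan is to obtain the stated presentation of $\Gamma$ from the Reidemeister--Schreier presentation of Theorem \ref{thm:SR} by a sequence of Tietze transformations, reading every coset product off the multiplication table of $\mathcal{S}_3$ for the chosen Schreier representatives $\{1,A,B,C,D,E\}$. One starts from the $nr=48$ generators $S_{K,x_j}$ together with the three families of relations listed there: the trivial Schreier relations $S_{M,v}=1$, the elliptic relations $\tau(Kx_j^{m_j}K^{-1})=1$, and the six long relations $\tau(KRK^{-1})=1$. Since $\Gamma\cong\pi_1(S)$ is the surface group of the genus obtained in the genus count, the reduction must terminate in $2g=16$ generators and a single relation; the entire argument is the explicit execution of this collapse, and the surviving relation is to be identified with $\mathcal{RR}$.

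First I would discharge the five trivial relations, which set $S_{1,x_1}=S_{1,x_3}=S_{1,x_5}=S_{1,x_7}=S_{1,x_8}=1$, one per nontrivial representative exactly as in the proof of Corollary \ref{cor:numbers}. Next I would consume the elliptic relations. For each order-two index $i\le 6$ the relation $\tau(Kx_i^2K^{-1})=S_{K,x_i}\,S_{\overline{K\phi(x_i)},x_i}=1$ identifies $S_{\overline{K\phi(x_i)},x_i}$ with $S_{K,x_i}^{-1}$, and together with the trivial relations this removes generators such as $S_{A,x_1}$, $S_{C,x_3}$, $S_{C,x_5}$. For the order-three indices $x_7,x_8$ the computations of Section \ref{sec:resultsofcomp}, for instance $S_{1,x_7}S_{D,x_7}S_{E,x_7}=1$ and its conjugate, give $S_{E,x_7}=S_{D,x_7}^{-1}$ and $S_{A,x_7}S_{B,x_7}=S_{C,x_7}^{-1}$, eliminating one generator per length-three orbit.

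The conceptual backbone of the remaining step is that $\phi(x_1\cdots x_7)=D$ generates the alternating subgroup $\langle D\rangle=\{1,D,E\}=A_3$, so the six long relations split according to the two cosets $\{1,D,E\}$ and $\{A,B,C\}$ of $A_3$. Because $R$ ends in $x_8$, each conjugate $\tau(KRK^{-1})$ terminates in a single factor $S_{*,x_8}$ with $*=\overline{Kx_1\cdots x_7}$; within a fixed $A_3$-coset the three terminal indices are precisely the three $x_8$-generators tied together by one of the two $x_8$-elliptic relations $\tau(x_8^3)=S_{1,x_8}S_{E,x_8}S_{D,x_8}=1$ and $\tau(x_1x_8^3x_1^{-1})=S_{A,x_8}S_{C,x_8}S_{B,x_8}=1$ (this closure reflecting $x_1\cdots x_7=x_8^{-1}$ and $x_8^3=1$). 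Solving each triple of long relations for its $x_8$-factors and substituting into the matching $x_8$-elliptic relation therefore fuses each triple into a single relation, producing the two displayed three-row relations. I would then merge these two into $\mathcal{R}$ by the $x_2$-elliptic identification $S_{A,x_2}=S_{1,x_2}^{-1}$, which cancels the $S_{1,x_2}$ and $S_{A,x_2}$ occurrences, and finally use the constraint governed by $S_{1,x_8}=1$ to solve for and eliminate $S_{E,x_5}$, arriving at $\mathcal{RR}$ on exactly the sixteen listed generators.

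The main obstacle is the bookkeeping of this final collapse: one must apply $\tau$ correctly to each of the six words $KRK^{-1}$, track the coset of every prefix against the table, and propagate all earlier substitutions without error, so that the surviving relations genuinely fuse into one. The clean terminal check is the criterion recorded after presentation \ref{eq:presentation}: in $\mathcal{RR}$ each of the sixteen surviving generators should occur, with its inverse, exactly once, which by the standard algorithm of \cite{Springer} certifies that $\mathcal{RR}$ is a bona fide surface relation. This is consistent with the genus count $g=8$ and with the relation tally $n-(n-1)+\sum_j n/m_i$ of Corollary \ref{cor:numbers}, whose leading term $n-(n-1)=1$ is exactly the single long relation that survives.
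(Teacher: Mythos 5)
Your proposal is correct and follows essentially the same route as the paper: the same sequence of Tietze eliminations starting from Theorem \ref{thm:SR} (discharging the trivial Schreier relations, using the order-two elliptic relations to identify inverses, fusing the six long relations with the two $x_8$-elliptic relations into the two three-row relations, merging them via $S_{A,x_2}=S_{1,x_2}^{-1}$ into $\mathcal{R}$, and finally eliminating $S_{E,x_5}$ to reach $\mathcal{RR}$ on the sixteen listed generators). Your observation that the six conjugates $\tau(KRK^{-1})$ fuse in triples according to the cosets $\{1,D,E\}$ and $\{A,B,C\}$ of $\langle D\rangle$ is a useful organizational gloss that the paper leaves implicit, but it is not a different argument.
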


Since these $16$ elements and their inverses occur in the single defining relation, using the algorithm from \cite{Springer}, these can be replaced by $16$ generators with a single relation that is a product of commutators. However, these new generators will not exhibit the action $G$ on the kernel as nicely (see Remark {\ref{rem:LINK}).

\subsection{Action of $G$ on the kernel and matrices}

\begin{prop} \label{prop:matrices}
The matrix  of the induced actions of the elements of $G$ on homology, 
 is a  $16 \times 16$ matrices with entries $0, 1,  \mbox{and} -1.$
 Only one row has more than one non-zero entry. All other rows have one non-zero entry.
\end{prop}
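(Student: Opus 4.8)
The plan is to read off every matrix entry from the single structural formula of Corollary~\ref{lem:action}, namely $g_X(S_{U,V}) = S_{\overline{XU},V}$, and then to account for the finitely many rewriting rules that express the raw Schreier generators in terms of the sixteen basis classes of Corollary~\ref{cor:gen-rel}. First I would record the ``raw'' action: by Corollary~\ref{lem:action}, each $g \in G$ acts on the full family $\{S_{K,x_j}\}$ by fixing the letter index $x_j$ and sending the coset index $K$ to $\overline{\phi(g)K}$; that is, on the raw generators $G$ acts by a permutation (left translation of coset representatives within each fixed column $x_j$). On the full generating family this would already give a permutation matrix, with a single $1$ in each row, so the content of the proposition is entirely about how this permutation interacts with the passage to the sixteen-element homology basis.

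Next I would catalogue the rewriting rules and classify them by the number of basis terms they produce in homology. There are exactly four types: (i) the Schreier relations $S_{M,v}=1$, which send a raw generator to $0$; (ii) the order-two relations $\tau(Kx_i^2K^{-1})=1$ with $i\le 6$, which pair the raw generators in inverse pairs $S_{\overline{Kx_i},x_i}=S_{K,x_i}^{-1}$ and hence produce a single signed term $-S_{K,x_i}$; (iii) the order-three relations $\tau(Kx_j^3K^{-1})=1$ with $j=7,8$, which after using $S_{1,x_7}=1$ collapse either to single signed identities such as $S_{E,x_7}=-S_{D,x_7}$ or, in the genuinely triple case, to a two-term homology identity such as $S_{B,x_7}=-S_{A,x_7}-S_{C,x_7}$; and (iv) the single long relation $\mathcal{RR}$, which was used to solve for the generator $S_{E,x_5}$ (eliminated at the end of Section~\ref{sec:S_3}) as a product of the remaining basis generators. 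The crucial observation, which keeps all coefficients in $\{0,1,-1\}$ even for type (iv), is that $\mathcal{RR}$ is a relation in which every generator and its inverse occurs exactly once; solving it for one generator therefore yields a word in which each surviving basis generator occurs with net homological exponent $0$ or $\pm1$.

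With this dictionary I would fill the matrix column by column: for each basis class $S_{U,V}$ and each $g\in G$ I compute the raw image $S_{\overline{\phi(g)U},V}$ and rewrite it using the appropriate rule. Types (i)--(iii) each contribute a single entry $0$, $+1$ or $-1$, so they produce rows with at most one nonzero entry; only a raw image equal to $S_{E,x_5}$ (type (iv)) produces a row with several $\pm1$ entries. Since exactly one raw generator was removed by the long relation, exactly one basis class has an image requiring that substitution, and hence exactly one row of each of these matrices carries more than one nonzero entry, which is the assertion.

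The main obstacle is the bookkeeping in the rewriting step, and in particular checking that the images of basis classes never fall on a type-(iii) \emph{two}-term generator such as $S_{B,x_7}$, which would create a second multi-entry row. This has to be verified directly from the $\mathcal{S}_3$ multiplication table of Section~\ref{sec:S_3}: one must track, column by column, the orbits of the left-translation $U\mapsto\overline{\phi(g)U}$ and confirm that the only multi-term landing site reachable from a basis class is $S_{E,x_5}$, all other landing sites being either basis classes themselves or single signed identifications of types (i)--(iii). Verifying the invertibility of each $g$ (equivalently, that no basis class maps into an eliminated, i.e.\ type-(i), generator, which would give a forbidden zero row) serves as the consistency check that pins down the correct basis and guarantees that the passage from the raw permutation to the reduced $16\times16$ matrix is the claimed signed-permutation-plus-one-exceptional-row form.
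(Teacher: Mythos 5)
Your bookkeeping scheme is the same one the paper uses (compute the raw images from Corollary~\ref{lem:action}, then rewrite them in the sixteen-element basis of Corollary~\ref{cor:gen-rel} via the Schreier relations, the order-two relations, the order-three relations, and the long relation $\mathcal{RR}$), but your accounting of which rewriting rule produces the exceptional row is exactly backwards, and this is not a cosmetic point --- it is the content of the proof. In the paper's computation the unique multi-entry row comes precisely from a landing on the two-term order-three generator: since $AD=B$, the basis element $S_{D,x_7}$ has $g_A(S_{D,x_7})=S_{B,x_7}$, and the relation $S_{A,x_7}S_{B,x_7}S_{C,x_7}=1$ of Section~\ref{sec:resultsofcomp} gives $S_{B,x_7}=S_{A,x_7}^{-1}S_{C,x_7}^{-1}$, i.e.\ the single row with two entries $-1$. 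Your proposal instead attributes the multi-entry row to a landing on the eliminated generator $S_{E,x_5}$, and elevates to a ``main obstacle'' the verification that no basis class ever lands on $S_{B,x_7}$. Carried out honestly for $g_A$ (the very element the paper computes), that verification fails at once, while your proposed source never occurs: the only $x_5$-generator in the basis is $S_{D,x_5}$, and $g_A(S_{D,x_5})=S_{B,x_5}=S_{D,x_5}^{-1}$, a single signed term. So your plan, executed as written, would report a ``forbidden'' second multi-entry row (the real one) and a phantom one (from $S_{E,x_5}$) that does not exist.

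Two secondary problems. First, if a raw image genuinely required solving $\mathcal{RR}$ for $S_{E,x_5}$, the resulting row would have on the order of fifteen nonzero entries, since every surviving generator occurs once in $\mathcal{RR}$; that is still consistent with the bare statement of the proposition, but it is not the two-entry exceptional row the paper's proof exhibits, so your mechanism cannot reproduce the paper's conclusion that every row has one or two nonzero entries. Second, your consistency check that no basis class maps to a trivial (type (i)) generator also fails if Corollary~\ref{lem:action} is taken at face value, as you take it: the formula gives $g_A(S_{A,x_7})=S_{\overline{AA},x_7}=S_{1,x_7}=1$, a zero row, for a basis element. The resolution is that the corollary's formula cannot be applied literally there; one must compute $\tau(x_1S_{A,x_7}x_1^{-1})$ directly (which yields $S_{A,x_3}^{-1}=S_{D,x_3}$, a single entry). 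The paper handles such cases by direct $\tau$-computations with the coset representatives; your proposal, by treating the raw action as an honest permutation throughout, has no mechanism for this and would stall at exactly the rows where the formula breaks down.
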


\begin{proof}

We can compute the images under $g_A$ of the generators using $g_A(S_{U,V})=S_{{\overline{AU}},V}$. Note that $g_A(S_{D,x_7}) = S_{B,x_7} =S_{A,x_7}^{-1}S_{C,x_7}^{-1}$, but all other images are of length one in the generators.
That is, for example,  $g_A(S_{B,x_4}) = S_{D,x_4}= S_{A,x_4}^{-1}$

 \noindent We note that inverses can be verified by using $\tau$, $\phi$, and the coset representatives.

 \noindent For example to see that $\tau(S_{D,x_4}S_{A,x_4}) = 1$, consider $Dx_4{\overline{Dx_4}}^{-1}Ax_4{\overline{Ax_4}}^{-1}$.

 \noindent Compute that  $Dx_4{\overline{Dx_4}}^{-1}Ax_4 {\overline{(AB)}}^{-1}= Dx_4A^{-1}Ax_4D^{-1}= Dx_4x_4D^{-1}=1$ since $x_4$ is of order $2$.

We replace the $16$ generators for the fundamental group by their images in the first homology group.

Note that the each row in the corresponding matrix for the action on the $16$ generators for the first homology has either $1$ or $2$ non zero entries. Each non-zero entry is with $1$ or $-1$.
There is only one row with more  than one non-zero entry.
Similar computations are easily carried out for $g_B,g_C,g_E$ and $g_D$.

\end{proof}

\begin{rem}\label{rem:LINK}
A pair of generators $u$ and $v$ that appear in a relation in a group are said to be linked if they appear in the order
   $...u....v...u^{-1}...v^{-1}...$ (see p 120 of \cite{Springer}) A set of $2g$ generators for the fundamental group of a genus $g$ compact surface in which every generator and its inverse occurs must be {\it linked}. If  the fundamental is given by $2g$ generators with a single defining relation in which every generator is linked with one other generator, it can be presented as a set of $2g$ generators whose defining relation is  a product of commutators.

\end{rem}

\begin{cor} \label{cor:adaptedgens}
The images in the first homology group of the $16$ generators listed form a basis for the first homology group, but do not form  an adapted homology basis for  $\Gamma$. 
\end{cor}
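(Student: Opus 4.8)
The plan is to treat the two assertions of Corollary~\ref{cor:adaptedgens} separately: first that the sixteen homology classes are a $\mathbf{Z}$-basis of $H_1(S)$, and then that this basis violates Definition~\ref{def:AHB}.

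For the basis claim I would start from Corollary~\ref{cor:gen-rel}, which presents $\Gamma$ on the sixteen generators listed there subject to the single defining relation $\mathcal{RR}$, in which every generator and its inverse occurs exactly once. By Remark~\ref{rem:LINK} and the cutting-and-pasting algorithm of \cite{Springer}, such a presentation is equivalent to a standard genus-$g$ surface presentation; the genus count gives $g=8$, so $2g=16$ agrees with the number of generators. Passing to $H_1(S)=\Gamma^{\mathrm{ab}}$, the relation $\mathcal{RR}$ abelianizes to the trivial relation, since each generator enters it once with exponent $+1$ and once with exponent $-1$, for total exponent $0$. Hence $H_1(S)$ is free abelian of rank $16$ on the images of the sixteen generators, and these images are therefore a basis. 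This step is essentially the standard fact that abelianizing a genus-$g$ surface group returns $\mathbf{Z}^{2g}$ with the displayed generators as basis, and I expect it to be routine.

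For the statement that the basis is not adapted I would use the explicit action of Proposition~\ref{prop:matrices}. The guiding observation is that each clause of Definition~\ref{def:AHB} tightly constrains the $G$-orbit of a basis curve: under conditions (1), (3) and (4) every image $g(\gamma)$ is again, up to sign, a basis curve, whereas condition (2) tolerates exactly one exceptional kind of image, namely the negative sum $-(\gamma+h(\gamma)+\cdots+h^{m_i-2}(\gamma))$ taken over the cyclic sub-orbit generated from $\gamma$ by a single element $h$ of order $m_i$, all of whose members are themselves basis curves. I would then isolate the curve $\gamma=S_{D,x_7}$, a lift of the order-$3$ elliptic generator $x_7$, for which Proposition~\ref{prop:matrices} records the one non-monomial image $g_A(S_{D,x_7})=S_{B,x_7}=-S_{A,x_7}-S_{C,x_7}$ in homology. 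Because $\gamma$ lifts an order-$3$ ramification, only conditions (2) and (3) could possibly apply to it, and both fail: the image $g_A(\gamma)$ is a negative sum of the curves $S_{A,x_7}$ and $S_{C,x_7}$, which lie in the other coset of the cyclic stabilizer and are not of the form $h^j(\gamma)$, so it is neither a single basis curve nor the negative sum over the sub-orbit of $\gamma$ itself that condition (2) permits. Conditions (1) and (4) fail for the same reason, since $g_A(\gamma)$ is not plus or minus a basis curve. As no clause holds for $\gamma=S_{D,x_7}$, the sixteen classes are not adapted.

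The step I expect to be the genuine obstacle is the analysis against condition (2), the only clause admitting a non-single-curve image and the one imposing a rigid, multi-coset consistency requirement. To make the failure airtight I would argue that the exceptional entry of the action matrix cannot be absorbed into a legitimate cyclic block by any reordering or sign change of these particular curves; that is, the mismatch is intrinsic to the chosen Schreier coset representatives, in which the identity coset lies over an order-$3$ branch point, rather than a cosmetic artifact. Establishing this rigidity, as opposed to merely exhibiting one awkward image, is the delicate point; the remaining verifications reduce to the bookkeeping already carried out in Section~\ref{sec:S_3} and Proposition~\ref{prop:matrices}.
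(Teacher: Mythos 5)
Your first claim (the basis statement) is fine and is essentially the paper's argument: the presentation of Corollary \ref{cor:gen-rel} is a one-relator presentation in which each generator has total exponent zero in $\mathcal{RR}$, so the abelianization is free of rank $16=2g$ on the images of the listed generators.

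The non-adaptedness argument, however, has a genuine gap, and it sits exactly at the step you yourself defer as ``the delicate point.'' Your stated reason for the failure of clause (2) of Definition \ref{def:AHB} --- that $g_A(S_{D,x_7})=S_{B,x_7}=-S_{A,x_7}-S_{C,x_7}$ is ``neither a single basis curve nor the negative sum over the sub-orbit of $\gamma$ itself'' --- rests on a misreading of that clause. The ``Further'' part of clause (2) permits one exceptional image \emph{per coset} of $\langle h\rangle$, namely $(g_h\circ h^{m_i-1})(\gamma)=-\bigl(g_h(\gamma)+(g_h\circ h)(\gamma)+\cdots+(g_h\circ h^{m_i-2})(\gamma)\bigr)$, a negative sum over a \emph{translated} sub-orbit. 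The image you single out has precisely this permitted shape: taking $h=g_D$ and coset representative $g_B$, one computes $g_B(S_{D,x_7})=S_{C,x_7}$, $(g_B\circ h)(S_{D,x_7})=S_{A,x_7}$, and $(g_B\circ h^2)(S_{D,x_7})=S_{B,x_7}=-(S_{C,x_7}+S_{A,x_7})$. So this relation is the signature relation of an adapted basis at an order-$3$ fixed point, not an obstruction, and exhibiting it proves nothing. What actually kills clause (2) for the $x_7$-lifts is the degeneration of the sub-orbit over the cosets $1,D,E$: in homology $S_{1,x_7}=0$ and $S_{E,x_7}=-S_{D,x_7}$, so the $\langle D\rangle$-sub-orbit through $S_{D,x_7}$ is $\{S_{D,x_7},\,-S_{D,x_7},\,0\}$, and no choice of $h$, of starting point, or of coset representatives can place the required members of such a collapsed orbit in a basis; this computation (together with ruling out order-$2$ elements $h$, for which one would need $h(\gamma)=-\gamma$) is the actual content of the claim, and you do not carry it out. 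Note that the paper sidesteps all of this by choosing a cleaner witness: the curve $S_{C,x_2}$, whose $G$-orbit consists up to sign of $S_{A,x_2},S_{B,x_2},S_{C,x_2}$, and $S_{A,x_2}$, $S_{B,x_2}$ simply do not occur among the sixteen basis elements at all (they were eliminated in forming $\mathcal{RR}$ and are long words in the basis), so every clause of Definition \ref{def:AHB} fails on its face. Either switch to that witness or supply the degenerate-sub-orbit computation; as written, your proof does not establish the second assertion.
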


\begin{proof}
We consider their images in the first homology group and note that these are distinct and the dimension is correct, We  note that the images under $G$ of all of lifts of $S{C,x_2}$ do not appear. Thus this is not an adapted basis.
Calculations show that $16$ generators are linked.
\end{proof}
\newpage

\subsection{The action of $\mathcal{S}_3$ when  $g_0 \ne 0.$}

\noindent  We consider the action of $\mathcal{S}_3$ in the case when $g_0 \ne 0.$

\noindent Let $\Omega_0$ be a group with generators
$$ a_1,....a_{g_0},b_1,...,b_{g_0}, y_1, y_2, y_3, y_4, y_5, y_6, y_7, y_8$$ and relations
$$(\Pi_{i=1}^{g_0}[a_i,b_i])y_1y_2y_3y_4y_5y_6y_7 y_8=1; y_i^2=1, i=1,... 6;  y_7^3 = y_8^3=1. $$

\noindent Let $G = {\mathcal{S}}_3$ be as in section \ref{sec:S_3} and let ${\hat{\phi}}: \Omega_0 \rightarrow G$ with  ${\hat{\phi}}(y_1) = {\hat{\phi}}(y_2) = A$, ${\hat{\phi}}(y_3) = {\hat{\phi}}(y_4) = B$, ${\hat{\phi}}(y_5) = \hat{\phi}(y_6) =C $ $\hat{\phi}\phi(y_7) = D$, $\hat{\phi}(y_8) = E$.
That is, assume that ${\hat{\phi}}$ and $\phi$ agree on the corresponding elliptic generators of $\Omega_0$ and $\Gamma_0$. The images of the hyperbolic generators under ${\hat{\phi}}$ are easy to control since they lie in a cyclic group of order three.

\begin{lem} \label{cor:hatgen-rel}
${\hat{\Gamma}}$ is generated by the $16$ elements
$$S_{B,y_4}, S_{C,y_6}, S_{A, y_7}, S_{C,y_2}, S_{D,y_1}, S_{D,y_3}, S_{C,y_7},
S_{A,y_4}, S_{D,y_5}$$

$$ S_{B,y_6}, S_{D,y_7}, S_{E,y_1},S_{A,y_7}, S_{E,y_3}S_{C,y_4},  S_{A,y_6}S_{E,y_7}$$ along with
 the $2g_0n$ elements in the set  $\{ (S_{K,a_i}, S_{K,b_i}) i = 1,...,g_0 \}$, the lifts of the hyperbolics. It has a single defining relation,
the relation ${\widehat{\mathcal{RR}}}$.
\end{lem}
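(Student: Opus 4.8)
The plan is to apply the Reidemeister--Schreier theorem (Theorem \ref{thm:SR}) to $\Omega_0$ using the same Schreier coset representatives $\{1,y_1,y_3,y_5,y_7,y_8\}$ (with images $1,A,B,C,D,E$) chosen for $\Gamma_0$ in Section \ref{sec:S_3}, and to show that enlarging $g_0$ from $0$ merely appends the hyperbolic Schreier generators and deforms the single long relation, leaving the elliptic computation untouched. First I would split the Schreier generators $S_{K,v}$ of $\hat\Gamma$ into \emph{elliptic} generators $S_{K,y_j}$ and \emph{hyperbolic} generators $S_{K,a_i},S_{K,b_i}$. Since by hypothesis $\hat\phi(y_j)=\phi(x_j)$ for every $j$, the torsion relations $\tau(Ky_j^{m_j}K^{-1})=1$ and the trivializing relations $S_{M,y_j}=1$ arising from the elliptic generators are, after the renaming $x_j\mapsto y_j$, word-for-word identical to those computed in Section \ref{sec:resultsofcomp} and in the calculation leading to Corollary \ref{cor:gen-rel}. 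The same chain of eliminations therefore reduces the elliptic generators to exactly the sixteen listed in the statement.

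Then I would deal with the hyperbolic generators. For each of the $2g_0$ hyperbolic generators of $\Omega_0$ and each of the $n=6$ representatives, Theorem \ref{thm:SR} yields a generator, giving $2g_0n$ generators $S_{K,a_i},S_{K,b_i}$. The decisive point is that $a_i$ and $b_i$ occur in no torsion relation of $\Omega_0$; they appear only inside the long relation $R=(\prod_{i=1}^{g_0}[a_i,b_i])\,y_1\cdots y_8$. Hence the hyperbolic Schreier generators satisfy no relation of torsion type, and they satisfy none of the trivializing relations $S_{M,v}=1$ either: such a relation requires $Mv$ to equal a chosen representative, and every chosen representative is a single elliptic generator or the identity, so no trivializing relation can involve an $a_i$ or $b_i$. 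The $2g_0n$ hyperbolic generators thus survive untouched, entering the presentation only through the rewritten long relations.

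It remains to combine the six long relations $\tau(KRK^{-1})=1$, $K\in\{1,A,B,C,D,E\}$, into a single defining relation. Under $\tau$ the suffix $y_1\cdots y_8$ rewrites precisely as in Section \ref{sec:S_3}, producing the words in the $S_{K,y_j}$ assembled there, while the commutator prefix $\prod_{i=1}^{g_0}[a_i,b_i]$ rewrites, via the definition of $\tau$ together with the coset bookkeeping $g_X(S_{U,V})=S_{\overline{XU},V}$ of Corollary \ref{lem:action}, into a word in the hyperbolic generators. I would then run the same reductions used in the $g_0=0$ case: solve $S_{1,y_8}=1$ for the remaining $S_{*,y_8}$, identify $S_{1,y_2}$ with $S_{A,y_2}^{-1}$, and eliminate $S_{E,y_5}$. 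Because all of these steps touch only elliptic letters, they go through verbatim in the presence of the extra commutator words, and the six long relations collapse to a single relation $\widehat{\mathcal{RR}}$ in the sixteen elliptic generators and the $2g_0n$ hyperbolic generators.

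The genuinely delicate step is not the elliptic bookkeeping, which is a transcription of the $g_0=0$ computation, but confirming that inserting the rewritten commutators into the long relations introduces no second, independent relation among the hyperbolic generators, so that a single defining relation really results. A clean consistency check comes from the Riemann--Hurwitz relation \ref{eq:RH}, which gives $2\hat g-2=6(2g_0-2)+26$ and hence $2\hat g=12g_0+16=2g_0n+16$; this equals the total generator count $16+2g_0n$, exactly the number of generators a surface group of genus $\hat g$ must have with one defining relation. This confirms that $\hat\Gamma$ is a surface group of the correct genus carrying the single relation $\widehat{\mathcal{RR}}$, which by the algorithm of \cite{Springer} can, if desired, be brought to a product-of-commutators form.
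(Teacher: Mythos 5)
Your proposal follows essentially the same route as the paper's own (much terser) proof: apply Theorem \ref{thm:SR} with the same Schreier representatives, rerun the $g_0=0$ eliminations verbatim on the elliptic generators, and observe that the rewritten commutator words $\tau\bigl(K(\Pi_{i=1}^{g_0}[a_i,b_i])K^{-1}\bigr)$ ride along untouched, so the six long relations still collapse to the single relation $\widehat{\mathcal{RR}}$. Your added details---why no trivializing relation $S_{M,v}=1$ can involve a hyperbolic letter, and the Riemann--Hurwitz count $2\hat g = 2ng_0+16$ matching the generator count---are sound and strengthen the argument rather than diverging from it.
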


\begin{proof}
Use theorem \ref{thm:SR} and repeat the calculations for $g_0$ carrying through the $\tau(K(\Pi_{i=1}^{g_0}[a_i,b_i])K^{-1})$. The latter are not involved in any of the eliminations. The calculation replaces $\mathcal{RR}$ and obtains a new single relation ${\widehat{\mathcal{RR}}}$ at the end.
\end{proof}

\begin{thm} \label{thm:extmore}  Let ${\hat{\phi}}$ and $\phi$ be surface kernel maps from $\Omega_0$ and $\Gamma_0$ respectively into $\mathcal{S}_3$. Assume that ${\hat{\phi}}$ and $\phi$ agree on the images of corresponding elliptic elements so that ${\hat{\phi}}(y_j) = \phi(x_j)$ for all $j = 1,...,r.$. Let $\hat{\Gamma}$ and $\Gamma$ be their respective kernels.
  Then generators for the commutator quotient of  ${\hat{\Gamma}}$ can be given by the images of the lifts of hyperbolic elements together with the images of the $16$ elements given in corollary \ref{cor:hatgen-rel}. The properties of the
  the $16$ elements in $\hat{\Gamma}$ and $\Gamma$ with respect to inverses and under the action of $G= {\mathcal{S}}_3$ are the same with $x_i$ replaced by $y_i$ for $i=1,...,8$.

\end{thm}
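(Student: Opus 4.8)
The plan is to lean on Lemma~\ref{cor:hatgen-rel}, which already records that $\hat\Gamma$ is generated by the $2g_0 n$ hyperbolic lifts $S_{K,a_i},S_{K,b_i}$ together with the same $16$ elliptic Schreier generators (now written in the $y_j$) as in the $g_0=0$ case, subject to a single relation $\widehat{\mathcal{RR}}$. The two assertions of the theorem then split cleanly: the statement about generators of the commutator quotient is obtained by abelianizing that presentation, while the statement about inverses and the $G$-action is a structural comparison of the two Reidemeister--Schreier computations.

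First I would fix the coset representatives for $\hat\Gamma$ in $\Omega_0$ to be the \emph{same} words $1,y_1,y_3,y_5,y_7,y_8$ (representing $1,A,B,C,D,E$) that were used for $\Gamma$ in $\Gamma_0$ with $x_j$ in place of $y_j$; this is legitimate because $\hat\phi(y_j)=\phi(x_j)$ for all $j$, so the images in $G$ agree and the Schreier and minimality conditions are unaffected. The crucial observation is that in rewriting the long relator $\tau\bigl(K(\prod_{i=1}^{g_0}[a_i,b_i])\,y_1\cdots y_8\,K^{-1}\bigr)$, the commutator prefix satisfies $\hat\phi(\prod_i[a_i,b_i]) = \prod_i[\hat\phi(a_i),\hat\phi(b_i)] = 1$ in $G$; this is forced, since $\hat\phi$ is a surface kernel map and $\hat\phi(y_1\cdots y_8)=A^2B^2C^2DE=1$. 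Consequently $\overline{K\prod_i[a_i,b_i]} = \overline{K} = K$, so the coset bookkeeping (the subscripts $V_i$ of the rewriting process) for the elliptic letters $y_1,\dots,y_8$ begins at $K$ exactly as it did in the $g_0=0$ case. Moreover the elliptic relators $\tau(Ky_j^{m_j}K^{-1})$ involve no hyperbolic generators, so they coincide verbatim with the $\Gamma_0$ relators under $x_j\mapsto y_j$.

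With these two facts in hand, the eliminations carried out in Section~\ref{sec:resultsofcomp} and the detailed calculation (solving $S_{*,x_8}=1$, $S_{1,x_7}=1$, $S_{E,x_7}=S_{D,x_7}^{-1}$, and so on) go through unchanged in the $y_j$, producing the same $16$ elliptic generators with the same inverse relations; the hyperbolic generators $S_{K,a_i},S_{K,b_i}$ are never touched by an elimination and simply ride along, so the reduced relation is $\mathcal{RR}$ with the rewritten prefix $\tau(K\prod_i[a_i,b_i]K^{-1})$ adjoined, i.e. $\widehat{\mathcal{RR}}$. The claim about the $G$-action is then immediate from Corollary~\ref{lem:action}: since $g_X(S_{U,V})=S_{\overline{XU},V}$ depends only on the coset representatives and the multiplication table of $G$, both identical in the two settings, the action of $\mathcal{S}_3$ on the $16$ elliptic generators (including length-two images such as $g_A(S_{D,x_7})=S_{B,x_7}=S_{A,x_7}^{-1}S_{C,x_7}^{-1}$) transfers verbatim under $x_i\mapsto y_i$.

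Finally, for Part~1 I would abelianize the presentation of Lemma~\ref{cor:hatgen-rel}: in $\widehat{\mathcal{RR}}$ every generator occurs together with its inverse, so the single relation abelianizes trivially and the listed $2g_0 n + 16 = 12g_0+16$ elements span $H_1(\hat\Gamma)$. A Riemann--Hurwitz count (equation~\ref{eq:RH}) gives $2\hat g - 2 = 6(2g_0-2)+26$, hence $2\hat g = 12g_0+16$, matching the generator count and confirming that the images form a basis. The part I expect to demand the most care is the coset-tracking verification: one must be certain that inserting the hyperbolic commutator block in front of the elliptic word does not shift any of the subscripts $V_i$ attached to the $y_j$ — which is exactly where the identity $\hat\phi(\prod_i[a_i,b_i])=1$ is indispensable — and one must check that none of the hyperbolic Schreier generators is accidentally drawn into the elimination steps.
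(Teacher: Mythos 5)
Your proposal is correct and follows essentially the same route as the paper: both rest on Lemma~\ref{cor:hatgen-rel} and then transfer the $g_0=0$ computation verbatim under $x_i\mapsto y_i$, with the $G$-action handled through $g_X(S_{U,V})=S_{\overline{XU},V}$. You supply details the paper leaves implicit --- notably the observation that $\hat\phi\bigl(\prod_i[a_i,b_i]\bigr)=1$ (which is exactly why the hyperbolic block never enters the coset bookkeeping or the eliminations) and the abelianization plus Riemann--Hurwitz count --- but these only flesh out, rather than alter, the paper's argument.
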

\begin{proof} If $v$ is any hyperbolic generator,  then $\{ S_{K,v} \}$ as $K$ varies over the  $6$  coset representatives consists of  the lift of $S_{1,v}$ and all of its images under $G$. The last statement is an easy translation from calculation for $\Gamma$. That is, $\tau(S_{D,x_4}S_{A,x_4})=1$ becomes $\tau(S_{D,y_4}S_{A,y_4})=1$.
\end{proof}

Since  $g = 2ng_0 + 16$, it follows immediately that:

\begin{thm}
The matrix of the induced action on homology is a $g \times g$ matrix that breaks up into four blocks: One block is $2ng_0 \times 2ng_0$ and is a permutation matrix. A second block is a $16 \times 16$ block and is the same as the matrix obtained in the case $g_0 =0$. The entries in the other two blocks are  all $0$.

\end{thm}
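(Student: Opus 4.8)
The plan is to read the block structure directly off the action formula $g_X(S_{U,V}) = S_{\overline{XU},V}$ of Corollary \ref{lem:action}, the crucial feature being that this formula leaves the \emph{second} index $V$ untouched. First I would fix the ordered basis for homology furnished by Lemma \ref{cor:hatgen-rel}: list the $2ng_0$ hyperbolic lifts $S_{K,a_i},\,S_{K,b_i}$ (as $K$ runs over the six coset representatives and $i=1,\dots,g_0$) first, then the $16$ elliptic-type generators $S_{*,y_j}$. Since the single defining relation $\widehat{\mathcal{RR}}$ is, by the linking argument of Remark \ref{rem:LINK}, a product of commutators, it dies in the abelianization, so these $2ng_0+16$ generators map to a $\mathbb{Z}$-basis of $H_1(S)$ and each $g_X$ is represented by an honest integer matrix in this basis.

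Next I would show that the action cannot couple the two blocks. For a hyperbolic generator $v\in\{a_i,b_i\}$, the image $g_X(S_{K,v})=S_{\overline{XK},v}$ is again a hyperbolic lift with the \emph{same} second index $v$; because Lemma \ref{cor:hatgen-rel} retains all $2ng_0$ hyperbolic lifts as surviving generators (the commutator terms being untouched by the eliminations that produced $\widehat{\mathcal{RR}}$), the element $S_{\overline{XK},v}$ is literally one of the listed basis generators, so no reduction by $\widehat{\mathcal{RR}}$ is needed and hyperbolic basis vectors go to hyperbolic basis vectors. Dually, for an elliptic generator $y_j$ the image $g_X(S_{U,y_j})=S_{\overline{XU},y_j}$ carries a $y_j$ second index, and by Theorem \ref{thm:extmore} the reduction of such an image to the $16$ basis generators is word-for-word the $g_0=0$ computation (with $x_i$ replaced by $y_i$), which never produces a hyperbolic lift. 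Hence both off-diagonal blocks are zero.

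It then remains to identify the two diagonal blocks. For the hyperbolic block, left multiplication by $\phi(X)$ permutes the cosets, so for each fixed $v$ the map $S_{K,v}\mapsto S_{\overline{XK},v}$ permutes the $n$ lifts with no sign change; assembling over the $2g_0$ hyperbolic generators yields a $2ng_0\times 2ng_0$ permutation matrix. For the elliptic block, Theorem \ref{thm:extmore} guarantees that the inverse relations and the $G$-action on the $16$ elements coincide exactly with the $g_0=0$ data recorded in Proposition \ref{prop:matrices}, so this block equals the $16\times 16$ matrix of the $g_0=0$ case. Combining the four pieces gives the asserted block-diagonal form.

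I would flag the middle step — the vanishing of the off-diagonal blocks — as the only real content, since it is where one must be certain that reducing an image $g_X(S_{U,V})$ to the chosen basis never forces an application of $\widehat{\mathcal{RR}}$ that mixes the two generator types. For the hyperbolic lifts this is immediate, their images being basis vectors outright, and for the elliptic generators it is precisely the self-containedness of the $g_0=0$ elliptic computation certified by Theorem \ref{thm:extmore}; once both are secured, the permutation structure and the $g_0=0$ identification of the diagonal blocks are routine.
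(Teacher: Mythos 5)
Your proposal is correct and takes essentially the same approach as the paper, which deduces the theorem directly from Lemma \ref{cor:hatgen-rel}, Theorem \ref{thm:extmore}, the action formula $g_X(S_{U,V})=S_{\overline{XU},V}$ of Corollary \ref{lem:action}, and the genus count. The paper compresses all of this into ``it follows immediately''; your write-up supplies the same chain of reasoning in explicit form (the choice of basis, the vanishing of the off-diagonal blocks because the second index is preserved and the eliminations never involve the hyperbolic lifts, and the permutation structure on the $2ng_0$ hyperbolic lifts).
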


\begin{rem}
Since the order of $\mathcal{S}_3$ is $6$, we can replace $2ng_0 + 16$ by $12g_0 + 16$ in all of the results above.
\end{rem}

\section{Questions} \label{sec:questions}

\noindent We refer to the automorphisms in Harvey's list as {\it Harvey operations}. We ask about {\it normal forms} for generating vectors. In \cite{BW} the authors use Harvey operations in the abelian case to find normal forms for generating vectors and in \cite{BW1} an example of using non-abelian Harvey operations for an action on ${\mathcal{S}}_3$ in genus $3$ is given. These two papers use Birman's work \cite{Birman}.
\vskip .2in

\noindent We pose some questions below.

\vskip .2in
\begin{enumerate}
\item {\sl Normal Forms}

\subitem For non-abelian $G$,  can we define a {\sl normal form} or {\sl simplest form} for a surface kernel map that corresponds to an equivalence class?

\subitem If so, can normal forms be easily enumerated and can the non-abelian version of Harvey's lift (or extensions) be efficiently used to compute the normal form?

\subitem What restrictions on $G$ (or on $\phi$ as in Theorem \ref{thm:furtherext}) allows for a more tractable equivalence problem?

    \subitem Can a {\it normal form} for a generating vector be defined that corresponds to an  equivalence class? Harvey did such for cyclic groups. Can this be done for any nonabelian group $G$?

     \subitem Can the Harvey operations be efficiently used to compute equivalence classes?

     \item Can we make restrictions on surface kernel maps (as in  Theorem \ref{thm:furtherext}) to make the Harvey operations simpler. Do the restrictions have a geometric interpretation beyond the initial interpretation with respect to curve lifting?

     \item  Can one apply the S-R theory using a set of coset representatives that are not a minimal Schreier set and obtain useful results?

\item  Are there are other useful extensions of Harvey's  surface kernel maps? If so, what are some?

    \end{enumerate}

\section{Acknowledgement} The author thanks the referee for helpful comments and for suggested extensions and improvements of some results.   \bibliographystyle{amsplain}

\end{document}